
\documentclass[]{amsart}  


\setcounter{tocdepth}{1}

\parskip 0.7pc
\parindent 0pt

\allowdisplaybreaks[1]

\usepackage{color}
\usepackage{tikz-cd}
\usetikzlibrary{arrows}
\usepackage{amsmath,amssymb}
\usepackage{graphicx}
\usepackage{mathrsfs}
\usepackage{mathabx}
\usepackage[all]{xy}
\usepackage{adjustbox}
\usepackage[normalem]{ulem}




\newtheorem{theorem}{Theorem}[section]
\newtheorem{cor}[equation]{Corollary}
\newtheorem{lemma}[equation]{Lemma}
\newtheorem{proposition}[equation]{Proposition}

\newtheorem*{thm*}{Theorem}

\newtheorem*{cor*}{Corollary}
\newtheorem*{lem*}{Lemma}
\newtheorem*{prop*}{Proposition}

\theoremstyle{definition}

\newtheorem{rmk}[equation]{Remark}


\newtheorem*{defn*}{Definition}
\newtheorem*{ex*}{Example}
\newtheorem*{exs*}{Examples}
\newtheorem*{rmk*}{Remark}
\newtheorem*{claim*}{Claim}

\numberwithin{equation}{section}
\numberwithin{figure}{section}

\DeclareMathOperator*{\holim}{holim}

\DeclareMathOperator{\wt}{wt}

\newcommand{\mF}{{\mathbb F}}

\newcommand{\mR}{{\mathbb R}}
\newcommand{\mS}{{\mathbb S}}

\newcommand{\mZ}{{\mathbb Z}}

\newcommand{\cA}{{\mathcal A}}

\newcommand{\cF}{{\mathcal F}}

\newcommand{\umF}{\underline{\mF}}

\newcommand{\umZ}{\underline{\mZ}}

\newcommand{\upi}{\underline{\pi}}

\newcommand{\bxi}{\bar{\xi}}


\title{A Thom Spectrum Model for $C_2$-Integral Brown--Gitler Spectra}

\author{Guchuan Li \and Sarah Petersen \and Elizabeth Tatum}

\begin{document}
	
	\begin{abstract}
	A Thom spectrum model for a $C_2$-equivariant 
        analogue of integral Brown--Gitler spectra is established and shown to have a multiplicative property. The $C_2$-equivariant spectra constructed enjoy properties analogous to classical nonequivariant integral Brown--Gitler spectra and thus may prove useful for producing $C_2$-equivariant analogues of splittings of $BP \langle 1 \rangle \wedge BP \langle 1 \rangle$ and $bo \wedge bo.$
	\end{abstract} 
	
	\maketitle
        \tableofcontents
	
\section{Introduction}

In the 1970's, Brown and Gitler constructed a family of spectra realizing certain sub-comodules of the dual Steenrod algebra at the prime $p=2$ \cite{brown1973spectrum}. Brown--Gitler's motivation for constructing the original spectra was to study immersions of manifolds. They have also been useful for studying maps out of classifying spaces. For example, they were used by Miller to prove the Sullivan Conjecture \cite{miller1984sullivan}. 
  
Brown--Gitler used an obstruction-theoretic approach, first constructing an algebraic resolution, then constructing a tower of spectra realizing that resolution, and then taking its inverse limit to obtain the desired spectrum. In \cite{Mahowald1977}, Mahowald suggested an alternative construction using a Thom spectrum model. In particular, Mahowald posited that the natural filtration of the space $\Omega^2 S^3$ due to May--Milgram \cite{MilMay81} produces a filtration of the Thom spectrum $(\Omega^2 S^3)^\mu \simeq H {\mF}_2$ by spectra. Here, $\mu$ is the double loop map 
$\mu: \Omega^2 S^3 \to BO$ of the classifying map of the M\"{o}bius bundle over $S^1.$ The resulting filtration spectra turn out to be the Brown--Gitler spectra of \cite{brown1973spectrum} (see \cite{BP78,cohen1979geometry,HuntKuhn99}). In \cite{cohen1979geometry}, Cohen made this construction precise at all primes. 

In \cite{mahowald1981bo}, Mahowald suggested that integral analogues of Brown--Gitler spectra should exist: that is, finite spectra realizing an analogous family of sub-comodules of $H_{*}H\mathbb{Z}$, and proposed a Thom spectrum model. In \cite{shimamoto1984integral}, Shimamoto gave a construction using the obstruction-theoretic approach, and also gave more details on Mahowald's proposed Thom spectrum model. Kane used an odd primary version of this Thom spectrum construction in \cite{kane1981operations}. In \cite{CDGM1988}, Cohen, Davis, Goerss, and Mahowald made these Thom spectrum constructions rigorous, in particular by explicitly defining the base space used in the Thom spectrum construction. 

In \cite{goerss1986some}, Goerss--Jones--Mahowald used the obstruction-theoretic approach to construct Brown--Gitler spectra for $BP\langle 1 \rangle$ and $bo$ (i.e. finite spectra realizing certain sub-comodules of $H_{*}BP\langle 1 \rangle$ and $H_{*}bo$), and Klippenstein extended these arguments to the $BP\langle 2 \rangle$ case in \cite{klippenstein1988brown}. These spectra are collectively known as generalized Brown--Gitler spectra. 

The higher Brown--Gitler spectra have been useful in understanding the smash products of various ring spectra. In particular, Mahowald used the integral Brown--Gitler spectra to decompose $bo \wedge bo$ as a sum of finitely generated $bo$-modules \cite{mahowald1981bo}. Kane subsequently produced an analogous decomposition for $BP\langle 1 \rangle$, also using integral Brown--Gitler spectra \cite{kane1981operations}. These splittings helped make it feasible to use the $bo$- and $BP\langle 1\rangle$-based Adams spectral sequences for computations and have proved to be powerful computational tools. For example, Mahowald used the $bo$-based Adams spectral sequence to prove the telescope conjecture at the prime $2$ and height $1$ \cite{mahowald1981bo}.

This paper extends Mahowald's idea for an integral analogue of Brown--Gitler spectra to the $C_2$-equivariant setting (Theorem \ref{thm:C2intBGspectra}). The $C_2$-equivariant spectra we construct realize an analogous family of sub-comodules of $H {\umF_2}_\star H \umZ,$ where $H \umF_2$ is the Eilenberg-MacLane spectrum associated to the constant $C_2$-Mackey functor $\umF_2$ and $H \umZ$ is the Eilenberg-MacLane spectrum associated to the constant $C_2$-Mackey functor $\umZ$. Our primary motivation for establishing a construction of these spectra is to enable the study of $C_2$-equivariant analogues of splittings of $BP \langle 1 \rangle \wedge BP \langle 1 \rangle$ and $bo \wedge bo,$ and related $BP \langle 1 \rangle$- and $bo$- based spectral sequence computations. 

In the $C_2$-equivariant setting, the Eilenberg-MacLane spectra $H \umF_p,$ $H \umZ_{(2)},$ and $H \umZ_2,$ associated to the constant $C_2$-Mackey functors $\umF_2,$ $\umZ_{(2)},$ and $\umZ_2$ respectively, also arise from Thom spectra. In \cite{BW2018}, Behrens--Wilson show there is an equivalence of $C_2$-spectra $(\Omega^\rho S^{\rho + 1})^\mu \simeq H \umF_2,$ where $\rho$ is the regular representation of $C_2$ and $\mu$ is the $\rho$-loop map
$ \mu: \Omega^\rho S^{\rho + 1} \to B_{C_2} O $
of the classifying map of the M\"{o}bius bundle over $S^1$ regarded as a $C_2$-equivariant virtual bundle of dimension zero by endowing both $S^1$ and the bundle with trivial action. Note we follow the convention that $\mZ_{(2)}$ denotes the $2$-local integers and $\mZ_2$ the $2$-adic integers.

In \cite{HW2020}, Hahn--Wilson generalize the result of Behrens--Wilson. They show the $G$-equivariant mod $p$ Eilenberg-MacLane spectrum arises as an equivariant Thom spectrum for any finite, $p$-power cyclic group $G.$ Hahn--Wilson also establish a Thom spectrum model for $H \umZ_{(p)},$ building a base space from  the $G$-space $\Omega^\lambda S^{\lambda + 1},$ where $\lambda$ denotes the standard representation of $G$ on the complex numbers with a generator acting by $e^{2\pi i / p^n}.$ They observe that this space carries the arity filtration from the $E_\lambda$-operad and thus one could define equivariant Brown--Gitler spectra as the spectra coming from this filtration. This is the perspective we take and extend to the integral setting in this paper. 

Non-equivariantly, two conditions characterize Brown--Gitler spectra. First, these spectra realize certain sub-comodules of the dual Steenrod algebra. Additionally, they satisfy a surjectivity condition coming from the geometry involved in Brown--Gitler's original construction \cite{brown1973spectrum}. While the Thom spectrum model does satisfy this additional condition in the non-equivariant setting, it is not clear from neither the geometry nor the Thom spectra filtration models, 
how to generalize this second condition in the equivariant setting. We discuss this in Remark \ref{rmk:GeomCond}, taking the philosophy proposed by Hahn--Wilson: one can simply define equivariant Brown--Gitler spectra using filtrations on the Thom spectra models and see if these spectra are computationally useful. 

Several factors motivate our choice to study $C_2$-equivariant, as opposed to $C_p$-equivariant spectra for all primes $p$. First, $RO(C_2)$-graded $H \umF_2$-homology is free over its coefficients in important examples such as the dual Steenrod algebra \\ ${H \umF_2}_\star H \umF_2$ \cite{HuKriz2001} and ${H \umF_2}_\star H \umZ$ \cite{Ormsby2011}. This is helpful because integral Brown--Gitler spectra topologically realize certain sub-comodules of ${H \umF_2}_\star H \umZ$. In contrast, $RO(C_p)$-graded $H \umF_p$-homology is often not free over its coefficients when $p > 2.$ In particular, the $C_p$-dual Steenrod algebra is not free \cite{SanW2il022}, which suggests the odd primary $C_p$-equivariant story may be more complicated, requiring techniques beyond those developed in this paper. 

Furthermore, when $G = C_2,$ there is a non-obvious $C_2$-equivalence of spaces $\Omega^\lambda S^{\lambda + 1} \simeq \Omega^\rho S^{\rho + 1}$ \cite{HW2020}. Thus there are filtrations of $\Omega^\lambda S^{\lambda + 1} \simeq \Omega^\rho S^{\rho + 1}$ by both the $E_\lambda$ and $E_\rho$ operads, leading to at least two definitions of Brown--Gitler spectra.  In this paper, we choose to work with the $E_\rho$-filtration because it is the most computationally accessible. Studying these two definitions of Brown--Gitler spectra may prove an interesting direction for future work.

\section{Statement of Theorems} Our main result is a $C_2$-equivariant analogue of \cite[Theorem~1.5(i),(ii)]{CDGM1988}. To state this theorem precisely, we recall ${H \umF_2}$ has distinguished elements $a \in H {\umF_2}_{\{-\sigma\}}$ and $u \in H {\umF_2}_{\{1 - \sigma\}},$ where $\sigma$ is the one-dimensional sign representation of $C_2$. We 
 define a weight filtration on 
$$ {H \umF_2}_\star H {\umZ} \cong H {\umF_2}_\star [\bar{\xi_1}, \bar{\xi_2}, \bar{\xi_3}, \cdots, c(\tau_1), c(\tau_2), \cdots ]/ (c(\tau_i^2) = a c(\tau_{i + 1}) + u \bar{\xi}_{i +1}),$$
	where $\vert \tau_j \vert = 2^j \rho - \sigma,$ $\vert \bar{\xi}_i \vert = (2^i - 1) \rho,$ and $c$ denotes the antiautomorphism of the dual Steenrod algebra $\cA \cong \pi_\star H \umF_2 \wedge H \umF_2$ (the computation of $H {\umF_2}_\star H \umZ$ follows from \cite[Theorem~3.8]{Ormsby2011}). A weight filtration is defined by 
	$$\wt(c(\tau_j)) = \wt(\bxi_j) = 2^j, \qquad \wt(xy) = \wt(x) + \wt(y). $$

\begin{thm*}[Theorem \ref{thm:C2intBGspectra}]
	For $n > 0,$ there is a $2$-complete spectrum $B_0(n)$ and a map 
	$$B_0(n) \xrightarrow[]{g} H {\umZ_{2}} $$
		such that
		\begin{itemize}
			\item[(i)] $g_\star$ sends $H {\umF_2}_\star(B_0(n))$ isomorphically onto the span of monomials of weight $\leq 2n;$
			\item[(ii)] there are pairings
			$$ B_0(m) \wedge B_0(n) \to B_0 (m + n) $$
			whose homology homomorphism is compatible with the multiplication in $H {\umF_2}_\star ( H \umZ_2 ).$
		\end{itemize}
	\end{thm*}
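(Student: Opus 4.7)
The plan is to construct $B_1(n)$ as the $H\umF_2$-completion of the Thom spectrum associated to the $n$-th stage of an arity filtration on a $C_2$-space $Y$ whose Thom spectrum realizes $H\umZ_{(2)}$, following the non-equivariant template of Cohen--Davis--Goerss--Mahowald \cite{CDGM1988}. Starting from Hahn--Wilson's Thom spectrum model for $H\umZ_{(2)}$ built from $\Omega^\lambda S^{\lambda+1}$ \cite{HW2020}, I would transport this along the $C_2$-equivalence $\Omega^\lambda S^{\lambda+1}\simeq\Omega^\rho S^{\rho+1}$ to a model in which $Y$ is an appropriate $C_2$-cover (or pullback) of $\Omega^\rho S^{\rho+1}$. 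Since $Y$ inherits the $E_\rho$-operad action, the arity filtration $F_1\subseteq F_2\subseteq\cdots\subseteq Y$ is defined; $B_1(n)$ is defined to be the $H\umF_2$-completion of $(F_n)^{\mu}$, and $g$ is the map induced by the inclusion $F_n\hookrightarrow Y$ after completion.

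For part (i), the Thom isomorphism yields ${H\umF_2}_\star (F_n)^{\mu}\cong {H\umF_2}_\star F_n$ as ${H\umF_2}_\star$-modules, and in the limit ${H\umF_2}_\star Y^{\mu}\cong {H\umF_2}_\star H\umZ$. I would then identify the image of ${H\umF_2}_\star F_n$ inside ${H\umF_2}_\star Y^\mu$ with the span of monomials of weight $\leq 2n$ in the presentation of ${H\umF_2}_\star H\umZ$ recalled from \cite{Ormsby2011}. The core of this identification is an $RO(C_2)$-graded analogue of the Snaith--James splitting for the arity filtration, in which the quotients $F_n/F_{n-1}$ are identified with equivariant configuration spectra whose homology is generated, under equivariant Dyer--Lashof operations, by classes tracked by the generators $\bxi_j$ and $c(\tau_j)$, with each Dyer--Lashof operation doubling weight. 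Freeness of ${H\umF_2}_\star H\umF_2$ \cite{HuKriz2001} and ${H\umF_2}_\star H\umZ$ \cite{Ormsby2011} over ${H\umF_2}_\star$ is what makes this module-level identification feasible in $RO(C_2)$-graded homology.

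For part (ii), the $E_\rho$-operad action restricts to the arity filtration to give pairings $F_m\times F_n\to F_{m+n}$, and Thomification yields the desired maps $B_1(m)\wedge B_1(n)\to B_1(m+n)$. Compatibility with the ring structure on ${H\umF_2}_\star H\umZ_2$ reduces to the statement that the Thom diagonal intertwines the operadic multiplication on $Y$ with the ring multiplication on $Y^\mu\simeq H\umZ_{(2)}$, which is a naturality property of the Thom functor applied to an $E_\rho$-algebra map, together with the fact that both structures descend to $H\umF_2$-completion.

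The principal obstacle is the weight--arity identification in part (i). Non-equivariantly this is read off from the May--Milgram filtration of $\Omega^2 S^3$ together with the classical Dyer--Lashof action, but in the $C_2$-equivariant setting one must carry $RO(C_2)$-gradings through the analogous splitting and account for the mixed relation $c(\tau_i^2)=a\,c(\tau_{i+1})+u\bxi_{i+1}$, which has no non-equivariant precedent and couples the weights of $c(\tau_{i+1})$ and $\bxi_{i+1}$ in a way that must be shown to respect, rather than violate, the proposed arity-to-weight correspondence. A secondary technical point is to verify that the chosen modification of $Y$ is genuinely $E_\rho$-equivariant (so that the filtration is multiplicative) and not merely an $E_\rho$-algebra up to covering data, since it is this structure that drives both the filtration and the pairing.
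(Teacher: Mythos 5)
There is a genuine gap at the heart of your plan: you assume that the arity (May--Milgram/configuration-space) filtration can simply be defined on the cover $Y$ of $\Omega^\rho S^{\rho+1}$ whose Thom spectrum is $H\umZ_{(2)}$. But that filtration, $F_n \Omega^\rho S^{\rho+1} \simeq \coprod_{k<n} C_k(\rho)\times_{\Sigma_k}(S^1)^{\times k}/\sim$, is a feature of the \emph{free} $E_\rho$-algebra model of $\Omega^\rho\Sigma^\rho S^1$; the relevant base space for the integral model is $\Omega^\rho S^{\rho+1}\langle\rho+1\rangle$, the fiber of $\Omega^\rho S^{\rho+1}\to S^1$, which is a $\rho$-fold loop space but not a free one, and the filtration pieces $F_n$ do not sit inside this fiber (each $F_n$ maps onto $S^1$). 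Transporting the $E_\rho$-structure along $\Omega^\lambda S^{\lambda+1}\simeq\Omega^\rho S^{\rho+1}$ does not help with this. The substance of the paper is precisely the replacement for your missing step: one defines $A_n$ as the homotopy fiber of $\cF_{2n+1}\to S^1_2$ (after $H\umF_2$-localization) and proves Theorem \ref{thm:main}, that this fibration splits as a product and that there is a map $\phi\colon A_n\to\cF_{2n}$ whose $H\umF_2$-homology image is the span of monomials of even weight. This is proved by induction on $n$, using Lemma \ref{lemma} (the cofibration $M_m(\partial I^m)\to F_{m-1}\to F_m$ expressing each filtration stage as a mapping cone) together with a $C_2$-equivariant vanishing $H^1_{C_2}(M_{2n}(\partial I^{2n})_2;\umZ_2)=0$, computed on fixed points via $C_{2n}(\mR)\times_{\Sigma_{2n}}S^{2n-1}$, to show the relevant map factors as $\ast\times h$ and hence splits off the $S^1_2$ factor. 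Your ``secondary technical point'' about $E_\rho$-equivariance of the cover gestures at this but misidentifies it: the issue is not whether $Y$ is an $E_\rho$-algebra (it is), but that no configuration-space filtration exists on it, so $B_1(n)$ cannot be defined as $(F_nY)^\mu$ without essentially proving Theorem \ref{thm:main} first. Your pairings in (ii) inherit the same gap: in the paper they are built from $A_m\times A_n\to\cF_{2m}\times\cF_{2n}\to\cF_{2(m+n)+1}\to A_{m+n}$, where the last map uses the splitting of Theorem \ref{thm:main}, not a restriction of the operad action to a filtration of $Y$.

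Two further points. First, your identification of weights with arity via the equivariant Snaith splitting and Dyer--Lashof operations is essentially what the paper does, but only for $F_n\Omega^\rho S^{\rho+1}$ itself (the Remark in Section \ref{sec:prelim}); it does not by itself produce the ``even-weight, weight $\leq 2n$'' subspace as the homology of a space, which again is what $A_n$ and Theorem \ref{thm:main} supply. Second, because the $A_n$ are $H\umF_2$-local spaces, one needs a Thom spectrum model of $H\umZ_2$ with a $2$-complete base space, which is why the paper proves Theorem \ref{thm:HZcomp}, $(\Omega^\rho S^{\rho+1}\langle\rho+1\rangle_2)^\mu\simeq H\umZ_2$, rather than quoting Hahn--Wilson's $H\umZ_{(2)}$ model directly; your plan glosses over this completion step.
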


The spectra $B_0(n)$ are $C_2$-equivariant analogues of Cohen, Davis, Goerss and Mahowald's integral Brown--Gitler spectra in the sense that they realize certain sub-comodules of $H {\umF_2}_\star H \umZ.$ This result follows from the more technical Theorem \ref{thm:main}, which is a $C_2$-equivariant analogue of \cite[Theorem~1.3]{CDGM1988}. To state Theorem \ref{thm:main}, we introduce both an increasing filtration of the space $\Omega^\rho S^{\rho + 1}$ and a homotopy fiber sequence.

Work of Rourke-Sanderson \cite{RS200} shows the space $\Omega^\rho S^{\rho + 1}$ admits an increasing filtration by 
\begin{align} \label{eq:filtration}
    F_n \Omega^\rho S^{\rho + 1} \simeq \underset{0 \leq k < n}{\displaystyle{\coprod}} C_k (\rho) \underset{\Sigma_k}{\times} (S^1)^k/ \sim
\end{align}
where
\[
C_k (\rho) = \{m_1, m_2, \cdots, m_k | m_i \neq m_j \text{ if } i \ne j \}
\]
is the configuration space of $k$ ordered points in the $C_2$-regular representation $\rho.$ Note when $k = 0,$ this gives the base point $x_0.$ If $x_n = x_0,$ the relation $\sim$ identifies
\[
(m_1, \cdots, m_n; x_1, \cdots x_n) \sim (m_1, \cdots, m_{n -1}; x_1, \cdots, x_{n - 1}).
\]
\begin{rmk}
    The observant reader might notice our definition of $F_n \Omega^\rho S^{\rho + 1}$ differs slightly from that of \cite{RS200}. This stems from the fact that Rourke--Sanderson work entirely with fixed points. A description comparing definitions can be found following Theorem 1.11 in \cite{GuillouMay2017}.
\end{rmk}
Let $X_2$ denote the Bousfield localization of $X$ with respect to $H {\umF_2}$ and let $\cF_n = (F_n \Omega^\rho S^{\rho + 1})_2.$
Then there are product maps 
$$ \cF_m \times \cF_n \overset{\mu}{\rightarrow} \cF_{m + n} $$
induced by the corresponding maps for the filtration spaces and the fact that localization preserves finite products. Define $A_n$ by the homotopy fiber sequence 
\begin{align} \label{eq:fib}
	A_n \to \cF_{2n + 1} \to S^1_2
\end{align}
where the second map is the $H \umF_2$-localization of the composite
$$ F_{2n + 1} \Omega^\rho S^{\rho + 1} \to \Omega^\rho S^{\rho + 1} \to S^1. $$ 

\begin{thm*}[Theorem \ref{thm:main}]
    The fiber sequence (\ref{eq:fib}) is equivalent to a product fibration. Indeed, there is a $C_2$-equivariant map $A_n \overset{\phi}{\rightarrow} \cF_{2n}$ and a commutative diagram of fibrations
	\[
	\begin{tikzcd}
		A_n \arrow[rr] \arrow[d]{} & & A_n \arrow[d] \\
			S_2^1 \times A_n \arrow[r, "1 \times \phi"] \arrow[d, "p_1"] & S_2^1 \times \cF_{2n}= \cF_1 \times \cF_{2n} \arrow[r, "m"] & \cF_{2n + 1} \arrow[d] \\
			S_2^1 \arrow[rr, equal] & & S_2^1 
		\end{tikzcd}
		\]
		which is an equivalence on total spaces and on fibers.
		
\end{thm*}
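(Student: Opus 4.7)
The plan is to follow the classical argument in \cite[Theorem~1.3]{CDGM1988}, adapted to the $C_2$-equivariant setting: construct $\phi$ from the multiplicative structure on the filtration and then verify the trivialization by an $H\umF_2$-homology comparison.

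First I would pin down the projection $\cF_{2n+1} \to S^1_2$ in terms of the Rourke--Sanderson model (\ref{eq:filtration}), where points of $F_k \Omega^\rho S^{\rho+1}$ are labeled configurations of $\leq k$ points of $\rho$ with labels in $S^1$. The composite $F_{2n+1}\Omega^\rho S^{\rho+1} \to \Omega^\rho S^{\rho+1} \to S^1$ should be identified with the ``product of labels'' map. The key intermediate lemma is that the composite
\[
    \cF_1 \times \cF_{2n} \xrightarrow{m} \cF_{2n+1} \to S^1_2
\]
is $C_2$-equivariantly homotopic to the first projection $p_1: \cF_1 \times \cF_{2n} \to \cF_1 = S^1_2$. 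This should be essentially formal from the configuration description: concatenation multiplies the label-products, and the label of the unique point of a generic element of $\cF_1$ is precisely its image in $S^1_2$ under the inclusion $\cF_1 = S^1_2 \hookrightarrow \cF_{2n+1}$.

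Granted the lemma, $m$ is a morphism of $S^1_2$-fibrations. Restricting to fibers over the basepoint gives a $C_2$-equivariant map $\bar m: \cF_{2n} \to A_n$. I would then define $\phi$ as a homotopy inverse to $\bar m$, so that $m \circ (1 \times \phi): S^1_2 \times A_n \to \cF_{2n+1}$ is a map over $S^1_2$ inducing $\bar m \circ \phi \simeq \mathrm{id}$ on fibers. The top and bottom squares of the asserted diagram then commute by construction, and once $\bar m$ is a weak equivalence the five lemma applied to the long exact sequences of homotopy groups of the two $S^1_2$-fibrations upgrades the fiber-wise equivalence to an equivalence on total spaces.

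The main obstacle is showing $\bar m: \cF_{2n} \to A_n$ is a weak $C_2$-equivalence. Classically, CDGM verify this by comparing the Serre spectral sequences of the trivial and the given fibration over $S^1_2$, using Mahowald's Dyer--Lashof description of $H_*(\Omega^2 S^3;\mF_2)$ to identify $H_*(\cF_{2n};\mF_2)$ with the appropriate summand of $H_*(\cF_{2n+1};\mF_2)$. Here one needs the analogous $RO(C_2)$-graded $H\umF_2$-homology of the filtration $F_n\Omega^\rho S^{\rho+1}$, together with the action of equivariant Dyer--Lashof operations coming from the $E_\rho$-structure of Hahn--Wilson and Levy. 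Presumably this computation is developed earlier in the paper, as it is needed to interpret Theorem~\ref{thm:C2intBGspectra} in any case; given it, a comparison of $RO(C_2)$-graded Serre spectral sequences over $S^1_2$ should produce an $H\umF_2$-homology isomorphism on $\bar m$, and hence a $C_2$-equivalence after $H\umF_2$-localization.
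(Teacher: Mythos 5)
There is a genuine gap: your key intermediate lemma is false, and the strategy built on it cannot be repaired. In the labeled-configuration model the map $\cF_{2n+1}\to S^1_2$ is (the localization of) the ``product of labels'' map $r$, so the composite $\cF_1\times\cF_{2n}\xrightarrow{m}\cF_{2n+1}\to S^1_2$ sends $(x,y)$ to $x\cdot r(y)$, not to $x$. The restriction of $r$ to $\cF_{2n}$ is not null: it restricts on $\cF_1=S^1_2\subset\cF_{2n}$ to an equivalence (the bottom cell generates, and the unit map is surjective on $\pi_1^{C_2}$ by Lemma \ref{claim:wtTwoDiv}), so in $H^1$ the class of the composite is $p_1^*\iota+p_2^*r^*\iota\neq p_1^*\iota$. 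Hence $m$ is not a map over $S^1_2$ via $p_1$, the restriction-to-fibers map $\bar m\colon\cF_{2n}\to A_n$ you want does not exist, and even the target statement is wrong: $H{\umF_2}_\star\cF_{2n}$ is the span of \emph{all} monomials of weight $\leq 2n$ (it contains $t_0$), while the fiber $A_n$ has homology only the monomials of weight divisible by $2$. So $\phi$ is not an equivalence and cannot be defined as a homotopy inverse of anything; the theorem asserts $S^1_2\times A_n\simeq\cF_{2n+1}$, not $S^1_2\times\cF_{2n}\simeq\cF_{2n+1}$ (the latter fails already by counting monomials: $H{\umF_2}_\star(S^1\times\cF_{2n})$ is too big).

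The actual argument (both in \cite{CDGM1988} and in this paper) is an induction on $n$ that constructs $\phi$ together with its source. One uses the cofibration $M_{2n}(\partial I^{2n})\xrightarrow{c}F_{2n-1}\to F_{2n}$ of Lemma \ref{lemma}, the inductive equivalence $\cF_{2n-1}\simeq S^1_2\times A_{n-1}$, and the vanishing $H^1_{C_2}(M_{2n}(\partial I^{2n})_2;\umZ_2)=0$ (checked on underlying and fixed points, the latter via $M_{2n}(\partial I^{2n})^{C_2}\simeq C_{2n}(\mR)\times_{\Sigma_{2n}}S^{2n-1}/C_{2n}(\mR)\times_{\Sigma_{2n}}*$) to write the localized attaching map as $*\times h$ with $h\colon M_{2n}(\partial I^{2n})_2\to A_{n-1}$. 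Setting $Y=\mathrm{cofib}(h)$, the map of cofibrations gives $\phi\colon Y\to\cF_{2n}$ whose image in $H{\umF_2}_\star$ is exactly the even-weight monomials; then $m\circ(1\times\phi)\colon S^1_2\times Y\to\cF_{2n+1}$ is an $H{\umF_2}_\star$-isomorphism (every odd-weight monomial of weight $\leq 2n+1$ is uniquely $t_0$ times an even-weight one), hence an $H\umF_2$-localization, so $\cF_{2n+1}\simeq S^1_2\times Y_2$ and $A_n\simeq Y_2$, which both proves the splitting and produces $\phi$ on $A_n$, closing the induction. No Serre spectral sequence comparison is needed; the essential inputs are the equivariant $H^1$-vanishing and the weight bookkeeping, neither of which appears in your outline.
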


Our argument for deducing Theorem \ref{thm:C2intBGspectra} from Theorem \ref{thm:main} follows that of Cohen--Davis--Goerss--Mahowald and thus relies on a Thom spectrum construction of $H \umZ_2$ with an $H \umF_2$-local base space (Theorem \ref{thm:HZcomp}).

In \cite{HW2020}, Hahn--Wilson establish a construction of $H \umZ_{(2)}$ as a $C_2$-Thom spectrum. In Section \ref{sec:ThomMod}, we extend Hahn--Wilson's arguments to construct $H \umZ_2$ as a $C_2$-equivariant Thom spectrum with an $H \umF_2$-local base space, resulting in Theorem \ref{thm:HZcomp}, an equivariant analogue of \cite[Theorem~1]{CoMayTay81}, which was originally proposed by Mahowald in the nonequivariant setting. 

To state Theorem \ref{thm:HZcomp}, we require the following maps. Consider the $\rho$-loops of the unit map $S^{\rho + 1} \to K(\umZ, \rho + 1).$ Composing with the adjoint to $-1 \in \pi_0^{C_2} (\mS_2^0)$, we get a map
    $$ \Omega^\rho S^{\rho + 1} \to \Omega^\rho K(\umZ, \rho + 1) \to BG L_1(\mS_2^0).$$
    Note $\Omega^\rho K(\umZ, \rho + 1) \simeq S^1$ with trivial $C_2$-action.

    Let $\Omega^\rho S^{\rho + 1} \langle \rho + 1 \rangle$ denote the homotopy fiber of the map $\Omega^\rho S^{\rho + 1} \to S^1$ and consider the composition 
    \[
    \mu: \Omega^\rho S^{\rho + 1} \langle \rho + 1 \rangle \to \Omega^\rho S^{\rho + 1}\to S^1 \to BGL_1(\mS_2^0). \label{eq:bundle}
    \]

    \begin{thm*}[\ref{thm:HZcomp}]
        There is an equivalence of $C_2$-spectra
    \begin{align*} 
        (\Omega^\rho S^{\rho + 1} \langle \rho + 1 \rangle_2)^\mu \to H \umZ_2. 
    \end{align*}
    \end{thm*}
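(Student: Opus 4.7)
The plan is to adapt the equivariant Thom spectrum construction of Hahn--Wilson \cite{HW2020}, which produces $H\umZ_{(2)}$, to the $H\umF_2$-localized (i.e., $2$-completed) base-space setting, paralleling Cohen--May--Taylor's nonequivariant construction of $H\mZ_2$ from $\Omega^2 S^3\langle 3\rangle_2$. First I would construct a comparison map
$$f: (\Omega^\rho S^{\rho+1}\langle\rho+1\rangle_2)^\mu \to H\umZ_2.$$
One approach is to exploit that the Thom spectrum inherits an $E_\rho$-ring structure from the base and that its zeroth Mackey functor homotopy group is $\umZ_2$ (by a Hurewicz argument using $1$-connectedness and $H\umF_2$-completeness of the base); obstruction theory on the Postnikov tower then produces $f$. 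Alternatively, $f$ can be obtained by $H\umF_2$-localizing Hahn--Wilson's equivalence along the localization map of base spaces.

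To show $f$ is an equivalence, I would reduce to proving an ${H\umF_2}_\star$ isomorphism: both source and target are connective and $H\umF_2$-complete. A Thom isomorphism --- valid because the M\"obius bundle becomes trivial in $H\umF_2$-homology on the simply connected cover --- gives
$${H\umF_2}_\star (\Omega^\rho S^{\rho+1}\langle\rho+1\rangle_2)^\mu \cong {H\umF_2}_\star (\Omega^\rho S^{\rho+1}\langle\rho+1\rangle_2)_+.$$
Analyzing the fibration $\Omega^\rho S^{\rho+1}\langle\rho+1\rangle \to \Omega^\rho S^{\rho+1} \to S^1$ together with Behrens--Wilson's identification ${H\umF_2}_\star (\Omega^\rho S^{\rho+1})_+ \cong \cA$ exhibits the right-hand side as the subalgebra of $\cA$ generated by $\bxi_i$ and $c(\tau_i)$ for $i \geq 1$, omitting the $c(\tau_0)$ class detected on the $S^1$ factor. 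This subalgebra is exactly ${H\umF_2}_\star H\umZ_2$ by the description recorded in the preceding section, and $f_\star$ realizes the identification by naturality of the Thom spectrum construction; comodule compatibility is inherited similarly.

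The main obstacle is bookkeeping the interaction among $H\umF_2$-completion of the base space, the M\"obius twist, and the $E_\rho$-ring structure. In particular, one must verify that the relevant Serre spectral sequence collapses after $H\umF_2$-completion, that the $E_\rho$-ring structure persists through $H\umF_2$-localization so that the Thom spectrum is genuinely a ring $C_2$-spectrum, and that the Hurewicz identification of $\pi_0$ yields exactly $\umZ_2$ rather than a larger pro-nilpotent Mackey functor. A secondary subtlety is verifying compatibility of $f_\star$ with the $\cA$-comodule structures on both sides, which is essential for the subsequent application to the spectra $B_1(n)$.
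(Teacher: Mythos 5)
There is a genuine gap at the central step of your plan: the construction of the comparison map $f$, equivalently the identification $\upi_0\big((\Omega^\rho S^{\rho+1}\langle\rho+1\rangle_2)^\mu\big)\cong\umZ_2$. Your ``Hurewicz argument using $1$-connectedness'' is a nonequivariant heuristic that does not survive passage to genuine $C_2$-spectra: $\upi_0$ of the $2$-complete sphere is the ($2$-completed) Burnside Mackey functor, not $\umZ_2$, and the $C_2$-fixed points of the base are not simply connected (from $(\Omega^\rho S^{\rho+1})^{C_2}\simeq \Omega S^2\times\Omega^2 S^3$ and Lemma \ref{claim:wtTwoDiv} one sees the fixed points of $\Omega^\rho S^{\rho+1}\langle\rho+1\rangle$ have nontrivial $\pi_1$). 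So even if the monodromy were trivial you would land on a Mackey functor strictly larger than $\umZ_2$; it is precisely the nontrivial module structure coming from the bundle that cuts the Burnside Mackey functor down to $\umZ_2$, and this is where the paper's proof does its real work. The paper decomposes the base circle of the fibration into a $0$-cell and a $1$-cell to get a cofiber sequence $(\Omega^\rho S^{\rho+1}\langle\rho+1\rangle)^\mu \xrightarrow{x} (\Omega^\rho S^{\rho+1}\langle\rho+1\rangle)^\mu \to (\Omega^\rho S^{\rho+1})^\mu \simeq H\umF_2$, uses the bar-construction description of Thom spectra to present $\upi_0$ as a cokernel of $\upi_0(f-\epsilon)$ on $\upi_0(S_2^0)$, pins down $x=2$ and $\upi_0\cong\umZ_2$ from the $C_2/e$-level and the Mackey structure, and then kills the higher homotopy Mackey functors level by level (underlying level from the classical result; genuine fixed points via isotropy separation, the Thom isomorphism on geometric fixed points, finite generation, and Nakayama). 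Your proposal contains no substitute for this computation, and the Postnikov obstruction theory you invoke cannot even begin without knowing $\upi_0$.

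Your fallback --- obtaining $f$ by localizing Hahn--Wilson's equivalence --- is closer to viable (the paper itself argues ``as in'' their Theorem 9.1 rather than reproving from scratch), but then essentially everything you defer as ``bookkeeping'' is the remaining mathematical content: one must compare the Thom spectrum with $S_2^0$-fibers over the uncompleted base to the one over the $H\umF_2$-localized base (the paper does this via a homology-isomorphism argument in its final paragraph), and one must establish that the Thom spectrum is itself $H\umF_2$-local/$2$-complete, which is not automatic for a Thom spectrum and is also a prerequisite for your reduction ``mod-$2$ homology isomorphism between complete connective spectra implies equivalence.'' On the positive side, your identification of ${H\umF_2}_\star$ of the base with the span of even-weight (equivalently $t_0$-free) monomials agrees with Lemma \ref{claim:wtTwoDiv} and matches ${H\umF_2}_\star H\umZ$, so the homological endgame is sound; the gap is upstream, in producing $f$ and computing $\upi_0$.
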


\subsection*{Acknowledgments}

The authors would like to thank to Agn\`{e}s Beaudry, Tobias Barthel, Mark Behrens, Christian Carrick, David Chan, John Greenlees, Bert Guillou, Paul Goerss, and Jeremy Hahn for enlightening conversations. The first and second authors would also like to thank the Max Planck Institute for Mathematics in Bonn for its hospitality and financial support. The third author thanks the Hausdorff Institute in Bonn for its hospitality and financial support, as well as the Knut and Alice Wallenberg Foundation for financial support. All three authors are grateful to the Hausdorff Institute in Bonn for its hospitality during the Spectral Methods in Algebra, Geometry, and Topology trimester program in Fall of 2022, funded by the Deutsche Forschungsgemeinschaft (DFG, German Research Foundation) under Germany's Excellence Strategy- EXC-2047/1-390685813. This material is based upon work supported by the National Science Foundation under Grant No. DMS 2135884.

\subsection{Structure of Argument}

We extend the arguments of Cohen--Davis--Goerss--Mahowald to the $C_2$-equivariant setting. To make this extension clear, we first recall Cohen--Davis--Goerss--Mahowald's nonequivariant argument, then outline the structure of our equivariant extension. 

In \cite{CDGM1988}, Cohen--Davis--Goerss--Mahowald make a Thom spectrum model for integral Brown--Gitler spectra precise by explicitly defining the base space. Specifically, they consider the space $\Omega^2 S^3 \langle 3 \rangle,$ the double loop space of the homotopy fiber of the unit map
$ S^3 \to K(\mZ, 3)$. They show that the $H \umF_2$-localization $\Omega^2 S^3 \langle 3 \rangle_2$ carries a filtration which induces the weight filtration on homology. The filtration pieces of the space $\Omega^2 S^3 \langle 3 \rangle_2$ are the base spaces in the Thom spectrum model for integral Brown--Gitler spectra. 

To extend Cohen--Davis--Goerss--Mahowald's nonequivariant argument to the $C_2$-equivariant setting, we show that the space $\Omega^\rho S^{\rho + 1} \langle \rho + 1 \rangle_2$ carries a filtration which induces an analogous weight filtration on $H \umF_2$-homology. This requires a model of $H \umZ_2$ as a Thom spectrum with $H \umF_2$-local base space $\Omega^\rho S^{\rho + 1} \langle \rho + 1 \rangle_2.$

We provide equivariant preliminaries in Section \ref{sec:prelim}. We construct $H \umZ_2$ as a $C_2$-equivariant Thom spectrum with $H\umF_2$-local base space (Theorem \ref{thm:HZcomp}) in Section \ref{sec:ThomMod}. In Section \ref{sec:fib}, we extend Cohen--Davis--Goerss--Mahowald's technical argument making the filtration of the base space
rigorous. This culminates in the proof of Theorem \ref{thm:main}, from which our main result concerning a Thom spectrum model for $C_2$-integral Brown--Gitler spectra (Theorem \ref{thm:C2intBGspectra}) immediately follows. 

\section{Equivariant Preliminaries} \label{sec:prelim}

\subsection{${H \umF_2}$-Bousfield localization and $2$-completion} \label{subsec:Bousfield}

A $C_2$-spectrum $X$ is connective if both $X$ and $X^{C_2}$ are connective. Define the $2$-completion of $X$ as $\holim X/2^k$, where $X/2^k$ denotes the cofiber of $X \xrightarrow{2^k} X$. Similarly to the non-equivariant case, for a connective $C_2$-spectrum $X$,
\begin{equation}\label{equation:completion}
      X^\wedge_2 \simeq  L_{H\umF_2} X. 
\end{equation}

In this paper, we use the notation $X_{2}$ to denote the $H\umF_{2}$-localization of any space or spectrum $X$. Note that all spectra in this paper are connective, so the $H\umF_2$-localization of any spectrum $X$ will coincide with its $2$-completion.

We outline how to deduce Equation (\ref{equation:completion}) analogously to the nonequivariant argument. Let $\mathbb{S}/2$ denote the $C_2$-spectrum with trivial $C_2$-action. One can show that $X^\wedge_2 \simeq  L_{\mathbb{S}/2} X$ by following the proof of the nonequivariant version \cite[Proposition 2.5]{Bo1979}. 
Let $\underline{A}$ be the Burnside Mackey functor. Replacing $H\mathbb{Z}$ in \cite[Proposition 2.11]{Bo1979} by $H\underline{A}$, one can similarly show that $L_{H\underline{A}\wedge \mathbb{S}/2} X \simeq L_{\mathbb{S}/2}(L_{H\underline{A}} X)$. We use the connective condition here to note that $L_{H\underline{A}} X \simeq X$. Now we have $$X^\wedge_2 \simeq L_{\mathbb{S}/2} X \simeq L_{H\underline{A}/2} X .$$
Finally, recall that $\underline{A}(C_2/e)\cong \mathbb{Z}$ and $\underline{A}(C_2/C_2) \cong \mathbb{Z}[\alpha]/(\alpha^2-2\alpha)$. Then the ideals $(2)$ and $(2,\alpha)$ have the same radical and $\underline{A}/(2,\alpha) \cong \underline{\mathbb{F}_2}$. One can finish the proof of \ref{equation:completion} by showing that $L_{H\underline{A}/2} X \simeq L_{H\underline{A}/(2, \alpha)} X \simeq L_{H\umF_2} X$. In fact, one can also replace $2$ with any prime $p$ and Equation (\ref{equation:completion}) still holds.

\subsection{May--Milgram Filtration}
As in \cite[Section 4]{BW2018}, we will make use of an identification of the $C_2$-fixed points of the space $\Omega^\rho S^{\rho + 1}.$ Consider the cofiber sequence
$${C_2}_+ \to S^0 \hookrightarrow S^\sigma.$$
Mapping out of this cofiber sequence gives a fiber sequence
$$ \Omega N^\times \Omega S^{\rho + 1} \to \Omega^\rho S^{\rho + 1} \to \Omega S^{\rho + 1} \xrightarrow{\Delta} N^\times \Omega S^{\rho + 1},$$
where $N^\times X : = \text{Map}(C_2, X) = X \underset{\substack{ \curvearrowbotleftright \\ C_2}}{\times} X$ is the norm with respect to Cartesian product (i.e. the coinduced space). On taking fixed points, we get a fiber sequence
$$\Omega^{2} S^3 \xrightarrow[]{} (\Omega^\rho S^{\rho + 1})^{C_2} \to \Omega S^2 \xrightarrow[]{\text{null}} \Omega S^3. $$
In particular, there is an equivalence
\begin{align} \label{eq:fixedpts}
    (\Omega^\rho S^{\rho + 1})^{C_2} \simeq \Omega S^2 \times \Omega^2 S^3.
\end{align}
	
Behrens--Wilson \cite{BW2018} also established an additive isomorphism
	\begin{align} \label{eq:hOmega2S3}
		H {\umF_2}_\star \Omega^\rho S^{\rho + 1} \cong H {\umF_2}_\star \otimes E [t_0, t_1, \cdots ] \otimes P [ e_1, e_2, \cdots ] 
	\end{align}
	where
	\begin{align*}
		\vert t_i \vert & = 2^i \rho - \sigma \\
		\vert e_i \vert & = (2^i - 1) \rho.
	\end{align*}
We define a weight on the monomials in $H {\umF_2}_\star (\Omega^\rho S^{\rho + 1})$ by 
	$$\wt(t_j) = \wt(e_j) = 2^j, \qquad \wt(ab) = \wt(a) + \wt(b), $$
and recall the space $\Omega^\rho S^{\rho + 1}$ admits an increasing filtration by spaces
\begin{align*}
   F_n \Omega^\rho S^{\rho + 1} \simeq \underset{0 \leq k < n}{\displaystyle{\coprod}} C_k(\rho) \underset{\Sigma_k}{\times} (S^1)^{\times k} / \sim, 
\end{align*}
where the relation is defined in equation (\ref{eq:filtration}).

\begin{proposition}
    The filtration $F_n \Omega^\rho S^{\rho + 1}$ is such that $H {\umF_2}_\star (F_n \Omega^\rho S^{\rho + 1} )$ is the span of monomials of weight $\leq n.$
\end{proposition}

\begin{proof}
 Let $F_n H {\umF_2}_\star \Omega^\rho S^{\rho + 1}$ denote the span of monomials of $H {\umF_2}_\star \Omega^\rho S^{\rho + 1}$ of weight less than or equal to $n$ and consider $t^\epsilon e^k := t_0^{\epsilon_0} t_1^{\epsilon_1} \cdots e_1^{k_1} e_2^{k_2} \in F_n H {\umF_2}_\star \Omega^\rho S^{\rho + 1}.$ Similarly to the nonequivariant case \cite[p. 239]{CohenLadaMay76}, the inclusion \\ $F_n H{\umF_2}_\star \Omega^\rho S^{\rho + 1} \hookrightarrow H {\umF_2}_\star \Omega^\rho S^{\rho + 1}$ factors through $H {\umF_2}_\star F_n \Omega^\rho S^{\rho + 1}$ since every monomial $ t^\epsilon e^k \in F_n H {\umF_2}_\star \Omega^\rho S^{\rho + 1}$ occurs by construction in $H {\umF_2}_\star F_n \Omega^\rho S^{\rho + 1}.$ Thus it is sufficient to show $F_n H {\umF_2}_\star \Omega^\rho S^{\rho + 1}$ is a basis for $H {\umF_2}_\star F_n \Omega^\rho S^{\rho + 1}.$ 

 We show $\{\Phi^e (t^\epsilon e^k)\}$ forms a basis of $H {\mF_2}_* (F_n \Omega^\rho S^{\rho + 1})^e$ and $\{\Phi^{C_2} (t^\epsilon e^k)\}$ forms a basis of $H {\mF_2}_* (F_n \Omega^\rho S^{\rho + 1})^{C_2},$ which by \cite[Lemma 2.8]{BW2018} completes the proof. 
 
 In \cite[\textsection 4]{BW2018}, Behrens--Wilson compute 
 \[
 \Phi^e (t^\epsilon e^k) = x_1^{2k_1 + \epsilon_0} x_1^{2k_2 + \epsilon_1} \cdots.
 \]
 From this we see $\{\Phi^e (t^\epsilon e^k)\}$ forms a basis of $H {\mF_2}_* (F_n \Omega^\rho S^{\rho + 1})^e$ as computed in \cite[p. 239]{CohenLadaMay76}.

 Similarly, we identify 
 \[
 (F_n(\Omega^\rho S^{\rho + 1}))^{C_2} \simeq (F_n \Omega S^2) \times (F_n \Omega^2 S^3)
 \]
 by noticing that  
 \[
 C_\rho^k (S^1)^{C_2} \simeq \underset{i + 2j = k}{\coprod} \left(C_i(\mR) \underset{\Sigma_i}{\times}{(S^1)^{\times i}}\right) \times \left(C_j(\mR^2) \underset{\Sigma_j}{\times} (S^1)^{\times j} \right),
 \]
where $C_k(M)$ is an ordered configuration of $k$ distinct points in $M.$
 
Then using \cite[\textsection 4]{BW2018} identification of $\{\Phi^{C_2} (t^\epsilon e^k)\}$ and \cite[p. 239]{CohenLadaMay76}, we see  $\{\Phi^{C_2} (t^\epsilon e^k)\}$ indeed forms a basis of $H {\mF_2}_* (F_n \Omega^\rho S^{\rho + 1})^{C_2}.$
\end{proof}
	
Recall $\Omega^\rho S^{\rho + 1} \langle \rho + 1 \rangle$ denotes the homotopy fiber of the map $\Omega^\rho S^{\rho + 1} \xrightarrow{r} S^1 ,$  the $\rho$-loops of the unit map $S^{\rho + 1} \to K( \umZ, \rho + 1),$ so
$$\Omega^\rho S^{\rho + 1} \langle \rho + 1 \rangle \to \Omega^\rho S^{\rho + 1} \xrightarrow{r} S^1 $$
 is a fiber sequence.
The fibration splits if there is an epimorphism
$$\pi_{1}^{C_2} \Omega^\rho S^{\rho + 1} \to \pi_1^{C_2} S^1.$$

 \begin{lemma}
     \label{claim:wtTwoDiv}
     There is an epimorphism $\pi_{1}^{C_2} \Omega^\rho S^{\rho + 1} \to \pi_1^{C_2} S^1.$ Hence, $\Omega^\rho S^{\rho + 1} \simeq S^1 \times \Omega^\rho S^{\rho + 1} \langle \rho + 1 \rangle$ and $H {\umF_2}_\star (\Omega^\rho S^{\rho + 1} \langle \rho + 1 \rangle ) \subseteq H {\umF_2}_\star (\Omega^\rho S^{\rho + 1})$ is the span of monomials of weight divisible by 2. 
 \end{lemma}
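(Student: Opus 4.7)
The plan is to produce a $C_2$-equivariant section of $r$, use the $H$-space structure on $\Omega^\rho S^{\rho + 1}$ coming from its $\rho$-fold loop space structure to promote the section to a splitting of the entire fibration, and finally read off the homology of the fiber via K\"unneth.

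For the section, I would identify the relevant homotopy groups via the $\rho$-loop/$\rho$-suspension adjunction:
$$\pi_1^{C_2} \Omega^\rho S^{\rho + 1} \cong \pi_{\rho + 1}^{C_2} S^{\rho + 1} \cong \pi_0^{C_2} \mS = A(C_2),$$
while $\pi_1^{C_2} S^1 \cong \mZ$. The map $r_*$ is induced by the unit $\mS \to H \umZ$, and on $\pi_0^{C_2}$ this unit is the Burnside augmentation $A(C_2) \to \mZ$, sending the unit element $1$ to $1 \in \mZ$ (and $[C_2/e]$ to $2$). In particular $r_*$ is surjective, giving the first claim. Any lift of $1 \in \mZ$ then corresponds to a $C_2$-equivariant based map $s \colon S^1 \to \Omega^\rho S^{\rho + 1}$ with $r \circ s \simeq \mathrm{id}_{S^1}$, using that $S^1$ with trivial $C_2$-action is a $K(\underline{\mZ}, 1)$.

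With the section in hand, I would use that $\Omega^\rho S^{\rho + 1}$ is a $\rho$-fold loop space, hence an $H$-space, and that $r$ is a loop map, to define
$$\mu \colon S^1 \times \Omega^\rho S^{\rho + 1} \langle \rho + 1 \rangle \to \Omega^\rho S^{\rho + 1}, \quad (u, x) \mapsto s(u) \cdot x.$$
This is a $C_2$-equivariant map over $S^1$: since $r$ is an $H$-map and $r(x) = *$ for $x$ in the fiber, $r(\mu(u, x)) = u = p_1(u, x)$. Moreover $\mu$ restricts to the identity on the fiber over the basepoint. Comparing long exact sequences of homotopy Mackey functors then forces $\mu$ to be a $C_2$-equivalence, yielding $\Omega^\rho S^{\rho + 1} \simeq S^1 \times \Omega^\rho S^{\rho + 1} \langle \rho + 1 \rangle$.

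For the final homology identification, K\"unneth applied to this splitting gives
$${H \umF_2}_\star (\Omega^\rho S^{\rho + 1} \langle \rho + 1 \rangle) \cong {H \umF_2}_\star \otimes E[t_1, t_2, \dots] \otimes P[e_1, e_2, \dots],$$
after cancelling the ${H \umF_2}_\star(S^1) = E[t_0]$ factor coming from the trivial $S^1$-side. Every surviving generator $t_i, e_i$ has weight $2^i$ for $i \geq 1$, which is even, and $t_0$ is the unique odd-weight generator of ${H \umF_2}_\star \Omega^\rho S^{\rho + 1}$, so this subalgebra is precisely the span of monomials of weight divisible by $2$. The main obstacle I anticipate is identifying $r_*$ as the Burnside augmentation on $\pi_0^{C_2}$; once that calculation is in place, the $H$-space splitting and the K\"unneth computation of the fiber's homology are formal.
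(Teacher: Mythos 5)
Your proposal is correct and follows essentially the same route as the paper: the paper also identifies $\pi_1^{C_2}\Omega^\rho S^{\rho+1}$ with $\pi_{\rho+1}^{C_2}S^{\rho+1}$, i.e.\ (stably) the Burnside Mackey functor, and deduces surjectivity onto $\pi_1^{C_2}S^1\cong\mZ$ from compatibility with restriction --- which is exactly your identification of the unit map as the augmentation $A(C_2)\to\mZ$, $1\mapsto 1$, $[C_2/e]\mapsto 2$. The subsequent $H$-space splitting and K\"unneth identification of the fiber's homology are treated as immediate in the paper, and your more detailed account of them is consistent with that.
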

\begin{proof}
	Stably, $\upi_{1} (\Omega^\rho S^{\rho + 1}) \cong \upi_{\rho + 1} S^{\rho + 1}$ is the Mackey functor 
	\[
	\begin{tikzcd}
		\mZ^2 \arrow[d,bend right =20,swap,"(1 \, \,  2)"]   \\
		\mZ \arrow[u, bend right=20].{} 
	\end{tikzcd}
	\] The induced map $\upi_1(r) : \upi^s_1 \Omega^\rho S^{\rho + 1} \to \upi_1 K(\umZ, 1)$ must be an epimorphism since the diagram of Mackey functors
	\[
	\begin{tikzcd}
		\mZ^2 \arrow[d,bend right = 20,swap,"(1 \, \, 2)"] \arrow[r] & \mZ \arrow[d, bend right = 20, swap, "\cong"]  \\
		\mZ \arrow[u, bend right = 20]{} \arrow[r, "\cong"] &  \mZ \arrow[u, bend right = 20] 
	\end{tikzcd}\] 
commutes. Evaluation at the $C_2 / C_2$ level gives an epimorphism 
$$\pi_1^{C_2} \Omega^\rho S^{\rho + 1} \to \pi_1^{C_2} S^1.$$ 
  \end{proof}

\subsection{Thom spectra of stable spherical fibrations} 
We describe a Lewis--May Thom spectrum functor for $H\umF_p$-local stable spherical fibrations $\mu: X \to BF_p$ (see \cite{LewMayStein86}, see also \cite[\textsection 4]{HW2020} and \cite[\textsection 3.4]{BlubCohSchli2010}). In the language of structured ring spectra, $BF_p$ can be identified with $BGL_1(\mS_p),$ the classifying space of the units of the $H \umF_2$-localized sphere spectrum $\mS_p.$

Let $V$ be a finite dimensional real $C_2$-representation and $F_p(V)$ be the topological monoid of basepoint-preserving $C_2$-equivariant homotopy equivalences of $S^V_p,$ the $H\umF _p$-localization of the one-point compactification of $V.$ There is an associated quasi-fibration
$$ EF_p(V) \to BF_p(V)$$
with fiber $S^V_p.$ Write $BF_p$ for the colimit of the spaces $BF_p(V)$ where the colimit is taken over a diagram with one object for each finite dimensional real $C_2$-representation and one arrow $V \to W$ if and only if $V \subset W.$

Given a map $f: X \to BF_p,$ let $X(V)$ be the closed subset $f^{-1} BF_p(V)$ and 
$$T(f)(V): = f^* EF_p(V)/X $$
be the Thom space of the induced map $f_V: X(V) \to BF(V).$ Here $f^*EF_p(V)$ is the pullback of $EF_p(V)$ and $X$ is viewed as a subspace via the induced section. 

This defines an object $T(f)$ in the category of $C_2$-orthogonal prespectra. Composition with the spectrification functor gives a Thom spectrum functor 
$$T_{\mS_p}: \mathcal{U} / BF_p \to \mathcal{S}^{C_2}_{p}$$
defined on the category $\mathcal{U} / BF_p$ of $C_2$-spaces over the classifying space $BF_p$ for $H\umF_p$-local stable spherical fibrations, with values in $H \umF_p$-local $C_2$-spectra $\mathcal{S}^{C_2}_{p}.$

\subsection{A Thom spectrum model for $\mathbf{H \umF_2}$} \label{sec:ThomModF2} 

Consider the $\rho$-loops of the unit map $S^{\rho + 1} \to K(\mZ, \rho + 1).$ Let $\mu$ denote composition with the adjoint to $-1 \in \pi_0^{C_2} (\mS_2^0):$ 
    $$ \mu: \Omega^\rho S^{\rho + 1} \to \Omega^\rho K(\umZ, \rho + 1) \to BG L_1(\mS_2^0).$$
\begin{lemma}
     \label{obs:HF2Thom}
The Thom class $(\Omega^\rho S^{\rho + 1})^\mu \to H \umF_2$ is an equivalence of $C_2$-spectra.
\end{lemma}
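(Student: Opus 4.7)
The plan is to reduce to the Behrens--Wilson equivalence $(\Omega^\rho S^{\rho+1})^{\mu_{BW}} \simeq H\umF_2$ already invoked in the introduction, after checking that the map $\mu$ defined here is (up to passing from $\mS$ to $\mS_2$) the classifying map of the Möbius bundle that Behrens--Wilson use. The lemma is essentially a repackaging of their theorem in the $2$-complete structured-ring-spectrum language adopted in this paper.

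First I would justify the parenthetical claim $\Omega^\rho K(\umZ,\rho+1)\simeq S^1$ with trivial $C_2$-action: by adjunction $\pi_k^H \Omega^\rho K(\umZ,\rho+1) = \pi_{k+\rho}^H K(\umZ,\rho+1)$ for $H\leq C_2$, and the only nonzero Mackey homotopy group of $K(\umZ,\rho+1)$ in a dimension of the form $k+\rho$ with $k\geq 0$ sits in degree $\rho+1$, agreeing with the corresponding Mackey homotopy group of $S^1$ with trivial action. Consequently $\mu$ factors as
$$\Omega^\rho S^{\rho+1}\xrightarrow{r}\Omega^\rho K(\umZ,\rho+1)\simeq S^1\xrightarrow{\alpha}BGL_1(\mS_2^0),$$
where $r$ is the $\rho$-loops of the unit and $\alpha$ is adjoint to $-1\in\pi_0^{C_2}(\mS_2^0)$.

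Next I would identify $\alpha$ with the $2$-completion of the Möbius-bundle classifying map. The homotopy class $[\alpha]\in\pi_1 BGL_1(\mS_2^0)\cong (\pi_0^{C_2}\mS_2^0)^\times$ is $-1$ by construction, and this is precisely the class of the (2-completed) Möbius bundle as a $C_2$-equivariant virtual bundle of dimension zero over $S^1$ with trivial action. Hence the composite $\mu$ agrees with the $2$-completion of the Behrens--Wilson map.

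Finally, Behrens--Wilson's equivalence $(\Omega^\rho S^{\rho+1})^{\mu_{BW}}\simeq H\umF_2$ yields what we want: since $H\umF_2$ is already $2$-complete, replacing the structure group $GL_1(\mS^0)$ by $GL_1(\mS_2^0)$ produces the same Thom spectrum, giving an equivalence $(\Omega^\rho S^{\rho+1})^\mu\simeq H\umF_2$ as $C_2$-spectra, and the Thom class is the equivalence by naturality of Thom diagonals.

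The main obstacle will be the bookkeeping in the second step: carefully matching normalizations so that the element $-1\in\pi_0^{C_2}\mS_2^0$ really does correspond to the Möbius bundle rather than to a twisted sibling, and verifying that the Lewis--May Thom functor of Section~\ref{sec:ThomModF2} applied to $\mu$ coincides with the classical Thom construction used by Behrens--Wilson after $2$-completion. Everything else is essentially a citation.
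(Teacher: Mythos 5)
Your route differs from the paper's: the paper disposes of this lemma with a citation to Levy's proof of Theorem A in \cite{Levy22}, whereas you try to reduce to the Behrens--Wilson equivalence by identifying the map $\mu$ of Section \ref{sec:ThomModF2} with the ($2$-completed) Behrens--Wilson map. The problem is the step ``hence the composite $\mu$ agrees with the $2$-completion of the Behrens--Wilson map.'' What you have actually checked is that the two maps agree on the bottom cell: both restrict on $S^1 \subset \Omega^\rho S^{\rho+1}$ to the class $-1 \in \pi_1 BGL_1(S^0_2)$ (and even there, distinguishing $-1$ from the other units $\pm(1-[C_2])$ of the Burnside ring requires a fixed-point check you only gesture at). Agreement on $S^1$ does not determine a map out of $\Omega^\rho S^{\rho+1}$ up to homotopy, and Thom spectra are sensitive to the full homotopy class of the map to $BGL_1(S^0_2)$. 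Behrens--Wilson's map is by definition the $\rho$-fold loop extension of the M\"obius class, using the equivariant infinite loop structure on the target; the map $\mu$ used here is instead the plain composite $\Omega^\rho S^{\rho+1} \xrightarrow{r} S^1 \xrightarrow{\alpha} BGL_1(S^0_2)$.

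To compare the two you would need to know that $\alpha$ itself admits a $\rho$-fold delooping compatible with $S^1 \simeq \Omega^\rho K(\umZ,\rho+1)$, i.e.\ that the class $-1 \colon S^{\rho+1} \to B^{\rho}BGL_1(S^0_2)$ extends over the unit map $S^{\rho+1} \to K(\umZ,\rho+1)$; only then are both maps $E_\rho$-maps out of the free grouplike $E_\rho$-algebra on $S^1$ agreeing on generators, hence homotopic. This delooping is a genuinely nontrivial statement -- it is the equivariant analogue of the Cohen--May--Taylor/Hopkins factorization, it holds only after completion or localization of the sphere, and it is precisely the input supplied in Hahn--Wilson and Levy. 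As written, your proposal treats this as ``bookkeeping,'' but it is the mathematical content of the lemma; without it the reduction to Behrens--Wilson does not go through. Either supply that delooping argument (at which point your comparison works, and the remaining $\mS$ versus $\mS_2$ base-change issues you flag are indeed routine), or argue as the paper does by invoking Levy's proof, which establishes the equivalence for exactly this composite map.
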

This follows from \cite[Proof of Theorem A]{Levy22}.

\section{A Thom spectrum model for $H \umZ_2$} \label{sec:ThomMod}
Arguing as in Hahn--Wilson \cite[Theorem~9.1]{HW2020}, which uses arguments of Antol\'{i}n-Camarena-Barthel \cite[\textsection~5.2]{AnCamBarth2019}, we prove 
\begin{theorem} \label{thm:HZcomp}
       There is an equivalence of $C_2$-spectra
    \begin{align*}
        (\Omega^\rho S^{\rho + 1} \langle \rho + 1 \rangle_2)^\mu \to H \umZ_2. 
    \end{align*}
    \end{theorem}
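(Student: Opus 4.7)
The plan is to follow Hahn--Wilson's proof of \cite[Theorem~9.1]{HW2020}, which adapts the arguments of Antol\'{i}n-Camarena--Barthel \cite[\textsection~5.2]{AnCamBarth2019} to the equivariant setting, combined with the multiplicative splitting of Lemma \ref{claim:wtTwoDiv} that is special to $G = C_2$. Writing $T := (\Omega^\rho S^{\rho + 1} \langle \rho + 1 \rangle_2)^\mu$, which is a connective $2$-complete $C_2$-spectrum, I would first identify $\upi_0 T \cong \umZ_2$ from the connectedness of the base space together with the general fact that Thom spectra of connected spaces over $\mS_2^0$ inherit $\upi_0$ from $\mS_2^0$. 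Postnikov truncation then provides a canonical map $T \to \tau_{\leq 0} T \simeq H \umZ_2$ which I would show is the asserted equivalence.

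The key computation is $T/2 \simeq H\umF_2$. By Lemma \ref{claim:wtTwoDiv} we have a splitting $\Omega^\rho S^{\rho+1} \simeq S^1 \times \Omega^\rho S^{\rho+1}\langle\rho+1\rangle$. Under this splitting, the map of Lemma \ref{obs:HF2Thom} decomposes, via the grouplike structure of $BGL_1(\mS_2^0)$, as the sum of the classifying map $\omega\colon S^1 \to BGL_1(\mS_2^0)$ of the M\"{o}bius bundle and the map $\mu$ of the present theorem. Consequently,
\begin{align*}
H \umF_2 \; \simeq \; (\Omega^\rho S^{\rho + 1}_2)^\mu \; \simeq \; (S^1_2)^{\omega} \wedge T \; \simeq \; (\mS_2^0/2) \wedge T \; \simeq \; T/2,
\end{align*}
invoking Lemma \ref{obs:HF2Thom} for the first equivalence and the classical identification of the M\"{o}bius Thom spectrum with $\mS_2^0/2$ for the third.

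To conclude, consider the cofiber sequence $T \xrightarrow{2} T \to T/2$. Since $T/2 \simeq H \umF_2$ has vanishing integer-graded homotopy Mackey functors in positive degrees, the long exact sequence in Mackey-functor homotopy shows that multiplication by $2$ is an isomorphism on $\upi_k T$ for $k \geq 1$. Combined with $2$-completeness of $T$, this forces $\upi_k T = 0$ for $k \geq 1$, so $T$ is concentrated in degree zero and the canonical map $T \to H \umZ_2$ is an equivalence.

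The principal obstacle is the decomposition of $\mu$ in the second step: verifying that the splitting of Lemma \ref{claim:wtTwoDiv} can be promoted to a decomposition compatible with the Thom spectrum construction, so that the external smash product formula $(A \times B)^{f + g} \simeq A^f \wedge B^g$ applies. This amounts to upgrading the section of Lemma \ref{claim:wtTwoDiv} to respect enough of the $E_\rho$-loop structure to be compatible with $\mu$, which in the $C_2$-case should reduce to a direct check at the level of Mackey-functor homotopy.
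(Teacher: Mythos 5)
Your overall architecture (produce $T/2 \simeq H\umF_2$, identify $\upi_0 T$, then use $2$-completeness to kill higher homotopy) is parallel in spirit to the paper, but your first step contains a genuine gap. The ``general fact'' that the Thom spectrum of a map from a connected space over $\mS_2^0$ inherits $\upi_0$ from $\mS_2^0$ is false: the paper's own Lemma \ref{obs:HF2Thom} (equivariantly) and Mahowald's theorem $(\Omega^2 S^3)^\mu \simeq H\mF_2$ (nonequivariantly) are counterexamples, since there $\pi_0$ of the Thom spectrum is $\mF_2$, not $\pi_0$ of the sphere. Worse, even if such a principle held, it would give $\upi_0 T \cong \upi_0 \mS_2^0$, which for genuine $C_2$-spectra is the ($2$-completed) Burnside Mackey functor, with $\pi_0^{C_2}$ of rank two over $\mZ_2$ --- not $\umZ_2$. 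The correct statement is that $\upi_0$ of the Thom spectrum is a quotient of $\upi_0\mS_2^0$ by classes coming from $\pi_1$ of the base acting through units, and showing that exactly the right classes (e.g.\ $[C_2]-2$) die is a substantive computation. This is precisely where the paper works: it expresses $\upi_0$ of the Thom spectrum as the cokernel of $\upi_0(f-\epsilon)$ via the bar construction, and pins down the Mackey functor as $\umZ_2$ using the cofiber sequence over the cells of $S^1$ together with the underlying ($C_2/e$) classical identification, which also identifies the self-map as multiplication by $2$. Note that your own key computation $T/2\simeq H\umF_2$ only yields the short exact sequence $0\to\upi_0 T\xrightarrow{2}\upi_0 T\to\umF_2\to 0$, which does not by itself single out $\umZ_2$ among Mackey functors; you would still need the underlying-level identification and a restriction/transfer argument, none of which appears in your proposal.

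The remainder of your route is workable and genuinely different from the paper in two places. For your ``principal obstacle,'' rather than trying to promote the splitting of Lemma \ref{claim:wtTwoDiv} to a structured one, it suffices to take the section to be the bottom-cell inclusion $S^1\to\Omega^\rho S^{\rho+1}$ (on which the total classifying map is the M\"obius class $-1$ by construction) and use that the classifying map is an H-map, so that on $S^1\times\Omega^\rho S^{\rho+1}\langle\rho+1\rangle$ it agrees up to homotopy with the external sum, giving $(\Omega^\rho S^{\rho+1})^{\mu}\simeq (\mS_2^0/2)\wedge T$; the paper instead avoids this by decomposing over the cells of $S^1$ and using only an $\mathbb{A}_2$-module structure, deferring the identification of the resulting self-map to the $\upi_0$ computation. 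Your final step (invertibility of $2$ on $\upi_k T$ for $k\geq 1$ plus $2$-completeness forces vanishing) is a clean alternative to the paper's argument, which instead checks the underlying level classically and the genuine fixed points via Nakayama's lemma, isotropy separation, the Thom isomorphism on geometric fixed points, and the identification $(\Omega^\rho S^{\rho+1})^{C_2}\simeq \Omega S^2\times\Omega^2 S^3$; but it only becomes a proof once the $\upi_0$ identification above is actually supplied.
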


\begin{proof}
We first show the Thom class
\begin{align} \label{eq:ThomCl} 
        (\Omega^\rho S^{\rho + 1} \langle \rho + 1 \rangle)^\mu \to H \umZ_2 
\end{align}
 is an equivalance of $C_2$-spectra.   
    Decomposing $S^1$ into a $0$-cell and a $1$-cell and trivializing the fiber on each cell produces a decomposition of the Thom spectrum $(\Omega^\rho S^{\rho + 1})^\mu$ as a cofiber
$$ (\Omega^\rho S^{\rho + 1} \langle \rho + 1 \rangle)^\mu \xrightarrow{x} (\Omega^\rho S^{\rho + 1} \langle \rho + 1 \rangle)^\mu \to (\Omega^\rho S^{\rho + 1})^\mu \simeq H \umF_2. $$

Each of these Thom spectra come from bundles classified by $\mathbb{A}_2$-maps, which is enough to ensure the map $x$ induces a map 
$$\upi_* (\Omega^\rho S^{\rho + 1} \langle \rho + 1 \rangle )^\mu \to \upi_* (\Omega^\rho S^{\rho + 1} \langle \rho + 1 \rangle )^\mu$$ of modules over $\upi_0 (\Omega^\rho S^{\rho + 1} \langle \rho + 1 \rangle )^\mu.$ In particular, on homotopy the map corresponds to multiplication by some element $x \in \upi_0 (\Omega^\rho S^{\rho + 1} \langle \rho + 1 \rangle )^\mu.$

Taking loops of $\mu: \Omega^\rho S^{\rho + 1} \langle \rho + 1 \rangle \to BGL_1 (\mS_2^0),$ we obtain a map
$$ \Omega \Omega^\rho S^{\rho + 1} \langle \rho + 1 \rangle \to GL_1 (\mS_2^0). $$
By the universal property of $GL_1(\mS_2^0),$ we equivalently have an $E_1$-ring map
$$ f: \Sigma_+^{\infty} \Omega \Omega^\rho S^{\rho + 1} \langle \rho + 1 \rangle \to \mS_2^0.$$
Similarly, there is also a map $\epsilon: \Sigma_+^\infty \Omega \Omega^\rho S^{\rho + 1} \langle \rho + 1 \rangle \to \mS^0_2$ coming from the trivial map $\Omega^\rho S^{\rho + 1} \langle \rho + 1 \rangle \to GL_1(\mS_2^0).$

The construction of Thom spectra as bar constructions (see Definition 4.1 of \cite{AndoBlumGepHop2014}) then implies $\underline{\pi}_0 (\Omega^\rho S^{\rho + 1} \langle \rho + 1 \rangle )^\mu \cong \text{coker} \underline{\pi}_0 (f - \epsilon).$ 

Thus  $\underline{\pi}_0 (\Omega^\rho S^{\rho + 1} \langle \rho + 1 \rangle )^\mu$ is 
    \[
\underline{\pi}_0 (S^0_2) \quad = \quad
	\begin{tikzcd} 
            1 \arrow[|->]{d} 
            & t \arrow[|->]{d}
	    &\umZ_2 [t] / (t^2) \arrow[out=225, looseness=1, swap]{d}{res} 
            & t \\
	      1
            & 2
            &\umZ_2 \arrow[in=315, looseness=1, swap]{u}{tr} 
            & 1 \arrow[|->]{u}
	\end{tikzcd}
 \]
    modulo classes in the image of $f - \epsilon.$ This also fits into a short exact sequence
    $$ \underline{\pi}_0 \Omega^\rho S^{\rho + 1} \langle \rho + 1 \rangle)^\mu \xrightarrow{x}  \underline{\pi}_0 \Omega^\rho S^{\rho + 1} \langle \rho + 1 \rangle)^\mu  \to \umF_2. $$
    From the $C_2/e$ spot we deduce $x = 2$ and from the short exact sequence and Mackey functor structure deduce that 
    $$ \underline{\pi_0} (\Omega^\rho S^\rho + 1 \langle \rho + 1 \rangle)^\mu = \umZ_2. $$
    
    We have checked that the Thom class (\ref{eq:ThomCl}) induces an isomorphism on $\underline{\pi_0}$. To show that it is an equivalence of $C_2$-spectra, it remains to check that this map induces an isomorphism in $\underline{\pi}_V$ for $V \neq 0$, that is, $\underline{\pi}_V( (\Omega^\rho S^{\rho + 1} \langle \rho + 1 \rangle)^\mu)$ is trivial. 
    
    The underlying level follows from the classical nonequivariant result. 

    The genuine fixed point level follows from Nakayama's lemma once one argues that the genuine fixed point spectra have finitely generated homotopy groups in each degree. 
The isotropy separation reduces us to the corresponding statement on geometric fixed points.

The geometric fixed points of a Thom spectrum are given by the Thom spectrum of the fixed points, so
\[
H_* (\Omega^\rho S^{\rho + 1} \langle \rho + 1 \rangle^\mu)^{\Phi C_2} \cong H_*( \Omega^\rho S^{\rho + 1} \langle \rho + 1 \rangle^{C_2} )^\mu
.\]
We can apply the non-equivariant $H \mF_2$-Thom isomorphism to get
\[
H_*(\Omega^\rho S^{\rho + 1} \langle \rho + 1 \rangle^{C_2} )^\mu \cong H_* (\Omega^\rho S^{\rho + 1} \langle \rho + 1 \rangle^{C_2}).
\]

    Recall from Lemma \ref{claim:wtTwoDiv} that $\Omega^\rho S^{\rho + 1} \simeq S^1 \times \Omega^\rho S^{\rho + 1} \langle \rho + 1 \rangle$, and from Equation (\ref{eq:fixedpts}) that $\Omega^2 S^3 \times \Omega S^2 \simeq (\Omega^\rho S^{\rho + 1})^{C_2}$. It follows that 
    $$\Omega^2 S^3 \times \Omega S^2 \simeq (\Omega^\rho S^{\rho + 1})^{C_2} \simeq (S^1 \times \Omega^\rho S^{\rho + 1} \langle \rho + 1 \rangle) ^{C_2} \simeq S^1 \times (\Omega^\rho S^{\rho + 1} \langle \rho + 1 \rangle)^{C_2}.$$
    Thus $H_* (\Omega^\rho S^{\rho + 1} \langle \rho + 1 \rangle)^{C_2} \subseteq H_* (\Omega^2 S^3 \times \Omega S^2)$ has finitely generated homology groups in each degree. Hence, it also has finitely generated homotopy groups in each degree, and we can apply Nakayama's lemma to finish the proof of the genuine fixed point level. Therefore $(\Omega^\rho S^{\rho + 1} \langle \rho + 1 \rangle)^\mu \simeq H \umZ_2$.

Recalling that the last map in the composition 
\[
\mu: \Omega^\rho S^{\rho + 1} \langle \rho + 1 \rangle \to \Omega^\rho S^{\rho + 1}\to S^1 \to BGL_1(\mS_2^0).
\]
is defined to be the adjoint to $-1 \in \pi_0^{C_2} (\mS_2^0)$ in Equation (\ref{eq:bundle}),
we define $\mu_2$ by taking the $H \umF_2$-localization
\begin{center}
\begin{tikzcd}
    \mu: \Omega^\rho S^{\rho + 1} \langle \rho + 1 \rangle \ar[r] \ar[d] & \Omega^\rho S^{\rho + 1} \ar[r] \ar[d] & S^1 \ar[r] \ar[d] & BGL_1(\mS_2^0). \\
    \mu_2: \Omega^\rho S^{\rho + 1} \langle \rho + 1 \rangle_2 \ar[r] & \Omega^\rho S^{\rho + 1}_2 \ar[r] & S^1_2 \ar[ur] &
\end{tikzcd}
\end{center}

Thus the localization map on the base spaces induces a map
\[
f: \Omega^\rho S^{\rho + 1} \langle \rho + 1 \rangle^\mu \to \Omega^\rho S^{\rho + 1} \langle \rho + 1 \rangle_2^{\mu_2}. 
\]
Since our Thom construction takes values in $H \umF_2$-local $C_2$-spectra, both \\ $\Omega^\rho S^{\rho + 1} \langle \rho + 1 \rangle^\mu$ and $\Omega^\rho S^{\rho + 1} \langle \rho + 1 \rangle_2^{\mu_2}$ are $H \umF_2$-local. Thus it suffices to show that $f$ induces an isomorphism on $H \umF_2$-homology. 

The localization map on the base spaces induces an $H \umF_2$-isomorphism by definition. They also each have an $H \umF_2$-Thom isomorphism since the composites 
\[
S^1 \to BGL_1 (\mS^0) \to BGL_1 (H \umF_2) \]
\[
S^1_2 \to BGL_1 (\mS^0_2) \to BGL_1 (H \umF_2)
\]
are null as the maps $\mu$ and $\mu_2$ represent the class $-1 \in \pi_0^{C_2} GL_1 (\umF_2)$, but $-1 = 1$ is the basepoint. 
Thus $f$ induces an isomorphism on $H \umF_2$-homology and hence is an equivalence. Therefore, slightly abusing notation by replacing $\mu_2$ with $\mu,$ $(\Omega^\rho S^{\rho + 1} \langle \rho + 1 \rangle_2)^\mu \simeq H \umZ_2$.
\end{proof}
 
\section{A product fibration} \label{sec:fib} 
Recall $X_2$ denotes the Bousfield localization of $X$ with respect to $H {\umF_2},$ we have set
$$\cF_n = (F_n \Omega^\rho S^{\rho + 1})_2,$$
and there are product maps 
	$$ \cF_m \times \cF_n \overset{\mu}{\rightarrow} \cF_{m + n} .$$
 Furthermore $A_n$ is defined by the homotopy fiber sequence 
	\begin{align} 
		A_n \to \cF_{2n + 1} \to S^1_2
	\end{align}
	where the second map is the localization of the composite
	$$ F_{2n + 1} \Omega^\rho S^{\rho + 1} \to \Omega^\rho S^{\rho + 1} \to S^1. $$ 
	
	We prove the $C_2$-equivariant analogue of \cite[Theorem~1.3]{CDGM1988}.
	\begin{theorem} \label{thm:main}
		The fiber sequence (\ref{eq:fib}) is equivalent to a product fibration. Indeed, there is a $C_2$-equivariant map $A_n \overset{\phi}{\rightarrow} \cF_{2n}$ and a commutative diagram of fibrations
		\[
		\begin{tikzcd}
			A_n \arrow[rr] \arrow[d]{} & & A_n \arrow[d] \\
			S_2^1 \times A_n \arrow[r, "1 \times \phi"] \arrow[d, "p_1"] & S_2^1 \times \cF_{2n}= \cF_1 \times \cF_{2n} \arrow[r, "m"] & \cF_{2n + 1} \arrow[d] \\
			S_2^1 \arrow[rr, equal] & & S_2^1
		\end{tikzcd}
		\]
		which is an equivalence on total spaces and on fibers.
	\end{theorem}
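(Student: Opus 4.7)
The plan is to mimic the nonequivariant argument of Cohen--Davis--Goerss--Mahowald for \cite[Theorem~1.3]{CDGM1988}, substituting the $C_2$-equivariant inputs from Section \ref{sec:prelim} and Lemma \ref{claim:wtTwoDiv}. The first step is to identify $H {\umF_2}_\star A_n$ from the fiber sequence (\ref{eq:fib}). By (\ref{eq:hOmega2S3}) and the weight filtration discussion, $H {\umF_2}_\star \cF_{2n+1}$ is the span of weight-$\leq 2n+1$ monomials in the generators $\{t_i, e_i\}$. Because $t_0$ is the unique generator of odd weight (all others have even weight $2^j$), such monomials decompose additively as weight-$\leq 2n$ monomials direct sum $t_0$ times weight-$2n$ monomials not involving $t_0$. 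The map $\cF_{2n+1} \to S^1_2$ factors through $\Omega^\rho S^{\rho+1} \to S^1$, and by Lemma \ref{claim:wtTwoDiv} this picks out the $t_0$-coefficient; hence $H {\umF_2}_\star A_n$ is the $t_0$-free weight-$\leq 2n$ subspace, a natural sub-object of $H {\umF_2}_\star \cF_{2n}$.

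Next I would construct $\phi: A_n \to \cF_{2n}$ using the $C_2$-equivariant splitting $\Omega^\rho S^{\rho+1} \simeq S^1 \times \Omega^\rho S^{\rho+1} \langle \rho+1 \rangle$ from Lemma \ref{claim:wtTwoDiv}. After $H \umF_2$-localization, this splitting combined with the filtration (\ref{eq:filtration}) induces a filtration on $\Omega^\rho S^{\rho+1} \langle \rho+1 \rangle_2$ whose filtration pieces $(F_n \Omega^\rho S^{\rho+1} \langle \rho+1 \rangle)_2$ realize the $t_0$-free part of $H {\umF_2}_\star \cF_n$. The fiber $A_n$ is then identified with $(F_{2n} \Omega^\rho S^{\rho+1} \langle \rho+1 \rangle)_2$ (both sides are $H \umF_2$-local with matching homology, and the splitting produces a comparison map), and $\phi$ is this identification followed by inclusion into $\cF_{2n}$. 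By construction $\phi$ factors through the $\langle \rho+1 \rangle$-component, so the composite $A_n \xrightarrow{\phi} \cF_{2n} \hookrightarrow \cF_{2n+1} \to S^1_2$ is null, and the diagram commutes as depicted.

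To see that the middle row $S^1_2 \times A_n \xrightarrow{1 \times \phi} \cF_1 \times \cF_{2n} \xrightarrow{m} \cF_{2n+1}$ is a weak equivalence, I would check it on $H \umF_2$-homology. Using the multiplication from the $E_\rho$-algebra structure, this sends $(\text{weight-}\leq 1) \otimes (t_0\text{-free weight-}\leq 2n)$ to weight-$\leq 2n+1$ monomials in $\cF_{2n+1}$. Because $t_0$ is exterior (so $t_0^2 = 0$), this multiplication is an isomorphism onto the direct-sum decomposition from the first paragraph. Since both sides are $H \umF_2$-local, the middle row is a weak equivalence on total spaces; restricting to the fiber over the basepoint of $S^1_2$ gives an equivalence $A_n \to A_n$ on fibers.

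The main obstacle will be making the construction of $\phi$ rigorous: one must verify that the splitting of Lemma \ref{claim:wtTwoDiv} is compatible with the May--Milgram-style configuration filtration (\ref{eq:filtration}) after $H \umF_2$-localization, so that the identification of $A_n$ with a filtered piece of $\Omega^\rho S^{\rho+1} \langle \rho+1 \rangle$ is homotopically meaningful and $C_2$-equivariant. This is the step that requires careful adaptation of the configuration-space geometry from \cite{CDGM1988} to the $C_2$-equivariant setting.
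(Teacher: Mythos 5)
Your homology bookkeeping (the even-weight, i.e.\ $t_0$-free, monomials of weight $\leq 2n$) and your closing step (check the middle row on $H {\umF_2}_\star$ using $t_0^2 = 0$, then invoke $H\umF_2$-locality of $\cF_{2n+1}$) agree with how the paper finishes. But there is a genuine gap exactly at the step you flag as "the main obstacle": the construction of $\phi$. You posit that the splitting $\Omega^\rho S^{\rho+1} \simeq S^1 \times \Omega^\rho S^{\rho+1}\langle \rho+1\rangle$ of Lemma \ref{claim:wtTwoDiv} together with (\ref{eq:filtration}) "induces a filtration" on $\Omega^\rho S^{\rho+1}\langle \rho+1\rangle_2$ whose $2n$-th stage is $A_n$, and you define $\phi$ as the resulting inclusion. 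No such filtration of the fiber exists a priori: the configuration filtration lives on $\Omega^\rho S^{\rho+1}$, the splitting is only a splitting of the whole space (there is no projection to the fiber respecting filtration stages), and producing, after localization, spaces realizing the even-weight part of each filtration stage is precisely the content of Theorem \ref{thm:main}. As written, the argument is circular. Similarly, reading off $H {\umF_2}_\star A_n$ directly from the fiber sequence (\ref{eq:fib}) presupposes the homological triviality of that fibration, which is part of what is being proved.

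The paper closes this gap by induction on $n$ with two inputs absent from your outline. First, Lemma \ref{lemma}: a cofibration sequence $M_{2n}(\partial I^{2n}) \xrightarrow{c} F_{2n-1} \to F_{2n}$, exhibiting each filtration stage as the mapping cone of a configuration-space attaching map, where $M_m(X) = C_m(\rho)\times_{\Sigma_m} X / C_m(\rho)\times_{\Sigma_m} *$. Second, the vanishing of the equivariant $H^1$ of $M_{2n}(\partial I^{2n})_2$ with integral coefficients, proved by identifying the $C_2$-fixed points with $C_{2n}(\mR)\times_{\Sigma_{2n}} S^{2n-1}/C_{2n}(\mR)\times_{\Sigma_{2n}}*$ and applying \cite[Lemma~2.8]{BW2018}. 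Given the inductive equivalence $\cF_{2n-1} \simeq S^1_2 \times A_{n-1}$, this vanishing forces the localized attaching map to have the form $* \times h$ with $h: M_{2n}(\partial I^{2n})_2 \to A_{n-1}$. One then defines $Y$ as the mapping cone of $h$; the map of cofibrations produces $Y \to \cF_{2n}$ with image the even-weight monomials, and the localization $Y_2 \simeq A_n$ supplies $\phi$ and extends the induction. This attaching-map analysis, not a compatibility of the splitting with the May--Milgram filtration, is the missing idea your proposal needs.
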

 
Our proof of Theorem \ref{thm:main} is a $C_2$-equivariant analogue of \cite[\textsection 2. Proof of Theorem~1.3]{CDGM1988}. Following \cite{CDGM1988}, we first show that the inclusion 
$$F_{m - 1} \Omega^\rho S^{\rho + 1} \to F_m \Omega^\rho S^{\rho + 1}$$ may be considered as a $C_2$-equivariant inclusion into a mapping cone. For a pointed $C_2 \times \Sigma_m$-space $X,$ we define
	$$ M_m(X) = C_m(\rho) \underset{\Sigma_m}{\times} X / (C_m(\rho) \underset{\Sigma_m}{\times}{*}). $$
	Let $I$ denote the unit interval, $I^m$ the $m$-dimensional unit interval, and $\partial I^m$ the boundary of $I^m,$ all with trivial $C_2$-action. Note that $I / \partial I \simeq S^1$ and $\partial I^m \simeq S^{m - 1}.$
	
	\begin{lemma} \label{lemma}
		Let $F_m = F_m \Omega^\rho S^{\rho + 1}.$ There is a cofibration sequence
		$$ M_m (\partial I^m) \xrightarrow[]{c} F_{m - 1} \to F_m. $$
	\end{lemma}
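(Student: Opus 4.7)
The plan is to adapt the nonequivariant argument from \cite[\textsection 2]{CDGM1988} to the $C_2$-equivariant setting. The crucial observation is that the $C_2$-action on the filtration pieces $F_m \Omega^\rho S^{\rho+1}$ is carried entirely by the configuration space factor $C_m(\rho)$: the $\Sigma_m$-action on $(S^1)^{\times m} = (I/\partial I)^{\times m}$ permutes the interval coordinates and commutes with the trivial $C_2$-action on each $S^1$. Consequently, the argument will closely parallel the nonequivariant one, with the $C_2$-action passing transparently through all constructions.

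First I would construct the attaching map $c \colon M_m(\partial I^m) \to F_{m-1}$. Using the identification $I/\partial I \cong S^1$ and hence $I^m/\partial I^m \cong (S^1)^{\wedge m}$, a point $[(m_1, \ldots, m_m); (x_1, \ldots, x_m)]$ of $C_m(\rho) \times_{\Sigma_m} I^m$ in which some $x_i \in \partial I$ corresponds, under the relation $\sim$ from (\ref{eq:filtration}), to a configuration with strictly fewer than $m$ marked points, hence to a point of $F_{m-1}$. The resulting ``forget and delete'' procedure produces a canonical $C_2$-equivariant map $C_m(\rho) \times_{\Sigma_m} \partial I^m \to F_{m-1}$, which descends to the desired $c$ on the based quotient $M_m(\partial I^m) = (C_m(\rho) \times_{\Sigma_m} \partial I^m)/(C_m(\rho) \times_{\Sigma_m} *)$.

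The core technical step is to exhibit $F_m$ as the mapping cone of $c$. For this I would use that $I^m$ is $\Sigma_m$-equivariantly homeomorphic to the unreduced cone on $\partial I^m$ via radial contraction to the center $(1/2, \ldots, 1/2)$, which is $\Sigma_m$-fixed. This cone structure is preserved by the Borel construction $C_m(\rho) \times_{\Sigma_m} (-)$, yielding a $C_2 \times \Sigma_m$-equivariant identification of $C_m(\rho) \times_{\Sigma_m} I^m$ with the cone on $C_m(\rho) \times_{\Sigma_m} \partial I^m$. Since $F_m$ is obtained from $F_{m-1}$ by attaching the top stratum $C_m(\rho) \times_{\Sigma_m} I^m$ along its boundary $C_m(\rho) \times_{\Sigma_m} \partial I^m$ via $c$, this realizes $F_m$ as the mapping cone of $c$ after passing to the based quotient by $C_m(\rho) \times_{\Sigma_m} *$.

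The main obstacle I anticipate is verifying that this attachment is a genuine $C_2$-equivariant cofibration, not merely a set-theoretic quotient, so that the resulting three-term sequence is a cofiber sequence of $C_2$-spaces. This reduces to the standard facts that $\partial I^m \hookrightarrow I^m$ is a $\Sigma_m$-equivariant NDR pair and that this property is preserved by the Borel construction $C_m(\rho) \times_{\Sigma_m} (-)$. The $C_2$-equivariance throughout is essentially automatic once the nonequivariant argument is set up carefully, as emphasized at the outset.
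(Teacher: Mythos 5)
Your construction of $c$ and your pushout description of $F_m$ match the paper's: the paper's attaching map is the composite $M_m(\partial I^m) \to M_m(T^m(I/\partial I)) \to F_{m-1}$ through the fat wedge $T^m(I/\partial I)$ of points with at least one basepoint coordinate, which is exactly your ``forget and delete'' procedure. The gap is in your core step. It is not true that the Borel construction preserves cones: applying $C_m(\rho) \times_{\Sigma_m} (-)$ to $I^m \cong C(\partial I^m)$ yields the \emph{fiberwise} cone, whose apex is the entire section $C_m(\rho) \times_{\Sigma_m} \{(1/2,\ldots,1/2)\} \cong C_m(\rho)/\Sigma_m$ (with its residual $C_2$-action), not a point; nonequivariantly this orbit space is a classifying space for the braid group, so it is far from contractible. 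In $F_m$ this section lies in the open stratum $F_m \setminus F_{m-1}$ and is not collapsed, whereas in the mapping cone of $c$ the corresponding locus is a single point, so your claimed identification of $F_m$ with the mapping cone of $c$ does not follow. Your proposed remedy, passing to the quotient by $C_m(\rho) \times_{\Sigma_m} *$, does not repair this: the basepoint is a corner of $\partial I^m$, so the basepoint section is disjoint from the center section and collapsing it leaves the apex section intact.

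The paper sidesteps this by forming the cone \emph{before} applying the Borel-type functor. At the level of $C_2 \times \Sigma_m$-spaces, $(I/\partial I)^{\times m}$ is exhibited as the cone on the natural map $\partial I^m \to T^m(I/\partial I)$ (here the cone point is an honest point, the image of the center of $I^m$), giving the cofibration $\partial I^m \to T^m(I/\partial I) \hookrightarrow (I/\partial I)^{\times m}$; only then is the based quotient functor $M_m(-) = C_m(\rho)_+ \wedge_{\Sigma_m} (-)$ applied, which carries this to the cofibration $M_m(\partial I^m) \xrightarrow{k} M_m(T^m(I/\partial I)) \to M_m((I/\partial I)^{\times m})$ because the cone coordinate carries the trivial $\Sigma_m$-action and smash-type quotients commute with such cones; finally $F_{m-1} \to F_m$ and the homeomorphism from the mapping cone of $c$ are obtained as further quotients via the relation $\sim$. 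The essential point your argument misses is that cone and Borel construction do not commute, so the order of operations matters; if you replace your step by the factorization through the fat wedge (equivalently, work with reduced cones in the based $\Sigma_m$-equivariant category before inducing up along $C_m(\rho)$), the rest of your outline goes through.
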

	
	\begin{proof}
		Let $T^m(S^1)$ denote the wedge with trivial $C_2$-action consisting of points in the $m$-fold Cartesian product having at least one component the basepoint. Viewing $I^m$ as the cone of the natural map $\partial I^m \to T^m (I / \partial I)$ yields a $C_2 \times \Sigma_m$-equivariant cofibration
		$$ \partial I^m \to T^m (I / \partial I) \hookrightarrow (I / \partial I)^{\times m}, $$
		with trivial $C_2$-action and hence a cofibration 
		\begin{align} \label{eq:cofib}
			M_m(\partial I^m) \xrightarrow[]{k} M_m (T^m(I / \partial I)) \to M_m((I/ \partial I)^{\times m}. 
		\end{align}
		
		Recall from \cite{RS200} that
		$$ F_m = \left( \bigcup_{k \leq m} M_k ((I / \partial I)^{\times k} ) \right) / \sim. $$
		
		The map $c$ of Lemma \ref{lemma} is the composite
		$$ M_m(\partial I^m) \to M_m (T^m (I / \partial I)) \to F_{m - 1}, $$
		where the second map uses the equivalence relation from (\ref{eq:filtration}) to ignore the basepoint in at least one component. The required homeomorphism from the mapping cone of $c$ to $F_m$ is a quotient of the homeomorphism in (\ref{eq:cofib}) from the mapping cone of $k$ to $M_m((I / \partial I)^{\times m}.$
	\end{proof}
	
	We now return to the proof of Theorem \ref{thm:main}. Assume by induction that the theorem has been proved for $n - 1$ so $\cF_{2(n - 1) + 1} \simeq S_2^1 \times A_{n - 1}.$ Note that $\cF_{2(n - 1) + 1} \to \cF_{2n - 1}$ is an equivalence. Localizing Lemma \ref{lemma} yields a map
	\begin{align} \label{eq:partial} M_{2n} (\partial I^{2n})_2 \to \cF_{2n - 1} \simeq S_2^1 \times A_{n - 1}.
        \end{align}

	\begin{lemma}
	   The cohomology $H^1_{C_2} (M_{2n}(\partial I^{2n})_2; {\umZ_2} \,) = 0,$ so the map (\ref{eq:partial}) is of the form $* \times h$, for some map $h: M_{2n}(\partial I^{2n})_{2} \rightarrow A_{n-1}$.  
	\end{lemma}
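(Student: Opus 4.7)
The plan is to mimic the nonequivariant argument of Cohen--Davis--Goerss--Mahowald. Obstruction-theoretically, $S^1$ with trivial $C_2$-action has $\underline{\pi}_*(S^1) = \umZ$ concentrated in degree one, so $S^1_2$ is the $C_2$-equivariant Eilenberg--MacLane space $K(\umZ_2, 1)$ and
\[
[M_{2n}(\partial I^{2n})_2,\, S^1_2]^{C_2} \cong H^1_{C_2}\bigl(M_{2n}(\partial I^{2n})_2;\, \umZ_p\bigr).
\]
Vanishing of this cohomology group forces the projection of (\ref{eq:partial}) onto the $S^1_2$-factor of $\cF_{2n-1} \simeq S^1_2 \times A_{n-1}$ (given by the induction hypothesis of Theorem \ref{thm:main}) to be null-homotopic, yielding the factorization $* \times h$ with $h \colon M_{2n}(\partial I^{2n})_2 \to A_{n-1}$.

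For the vanishing I would establish $C_2$-equivariant simple-connectedness of $M_{2n}(\partial I^{2n})$, meaning that both the underlying space and its $C_2$-fixed points are simply connected. In that case $M_{2n}(\partial I^{2n})$ admits a $C_2$-CW structure with no orbit-cells of dimension $\leq 1$ above the basepoint, so $H^1_{C_2}$ with any Mackey functor coefficients vanishes. On the underlying level, $\partial I^{2n} \simeq S^{2n-1}$ is $(2n-2)$-connected with $\Sigma_{2n}$-fixed basepoint, so the mixing construction $C_{2n}(\rho) \times_{\Sigma_{2n}} \partial I^{2n} / C_{2n}(\rho) \times_{\Sigma_{2n}} *$ inherits $(2n-2)$-connectivity, which is enough for $n \geq 2$; the base case $n = 1$ is handled by direct inspection of $M_2(\partial I^2)$ using the cofiber sequence of Lemma \ref{lemma}.

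The main obstacle is the fixed-point level. The set $(C_{2n}(\rho) \times_{\Sigma_{2n}} \partial I^{2n})^{C_2}$ decomposes into strata indexed by $\Sigma_{2n}$-conjugacy classes of involutions $\sigma$ with $k$ transpositions and $2n-2k$ fixed indices: the corresponding stratum fibers configurations with the fixed-index points placed on the axis $\rho^{C_2}$ and the transposition pairs given by $C_2$-conjugate pairs in $\rho \setminus \rho^{C_2}$, over a $\sigma$-invariant label in $(\partial I^{2n})^\sigma \cong \partial I^{2n-k} \simeq S^{2n-k-1}$, modulo the centralizer $\Sigma_{2n-2k} \times (C_2 \wr \Sigma_k)$. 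Although the configuration bases carry braid-like $\pi_1$, the $(2n-k-2)$-connectivity of the label spheres combined with a Serre spectral sequence argument (and the observation that residual $\pi_1$-contributions of the base are annihilated by the transfer in the Bredon cellular cochain complex after $2$-completion) gives the desired vanishing. Assembling the underlying- and fixed-point-level vanishings via the isotropy-separation sequence in Bredon cohomology with constant Mackey coefficients then produces $H^1_{C_2}\bigl(M_{2n}(\partial I^{2n})_2;\, \umZ_p\bigr) = 0$.
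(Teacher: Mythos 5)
Your high-level strategy matches the paper's: interpret the projection to $S^1_2$ as a class in $H^1_{C_2}$ with coefficients in the $2$-adic Mackey functor, reduce the vanishing to the underlying level and the $C_2$-fixed-point level, cite CDGM for the underlying level, and then assemble. (The paper does the assembly via \cite[Lemma~2.8]{BW2018} applied to $H^1(M_{2n}(\partial I^{2n})^e;\mZ)$ and $H^1(M_{2n}(\partial I^{2n})^{C_2};\mZ)$, rather than via a $C_2$-CW / isotropy-separation argument, but that difference is cosmetic.)

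The genuine divergence, and where your argument has a gap, is the fixed-point level. The paper does \emph{not} carry out a stratification by involution types. Instead it identifies the $C_2$-fixed points of each stage of the quotient diagram directly, concluding
\[
\bigl(C_{2n}(\rho)\times_{\Sigma_{2n}} \partial I^{2n}\bigr)^{C_2}\ \simeq\ C_{2n}(\mR)\times_{\Sigma_{2n}} \partial I^{2n},
\]
and then uses freeness of $\Sigma_{2n}$ on $C_{2n}(\mR)$ and contractibility of $C_{2n}(\mR)/\Sigma_{2n}$ to reduce to the high connectivity of $S^{2n-1}$, so $H^1(M_{2n}(\partial I^{2n})^{C_2};\mZ)=0$. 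Your stratification picture (by conjugacy classes of involutions $\sigma$, with fixed-index points forced onto $\rho^{C_2}$ and transposed indices forming conjugate pairs, with labels in $(\partial I^{2n})^\sigma\cong S^{2n-k-1}$) is a much more detailed description of the fixed-point set, and it is in fact the accurate one — but you then wave away the nontrivial part. The sentence ``residual $\pi_1$-contributions of the base are annihilated by the transfer in the Bredon cellular cochain complex after $2$-completion'' is not an argument: you are computing $H^1$ of a nonequivariant space (the $C_2$-fixed points), so there is no Bredon cochain complex or transfer in play there, and the strata with many transpositions have labels $S^{2n-k-1}$ with $k$ close to $n$, which are \emph{not} highly connected, so the connectivity-of-labels mechanism fails for exactly those strata. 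You would need a genuine Mayer--Vietoris / stratification spectral sequence argument tracking how those strata are glued, together with an explicit computation of $H^1$ of each closed stratum, and none of that is supplied. Likewise, you defer the $n=1$ base case to ``direct inspection'' without doing the inspection. So as written the proposal establishes the reduction to fixed points but does not establish the vanishing there; either close the stratification argument properly, or notice (as the paper does) the much shorter route via the identification with $C_{2n}(\mR)\times_{\Sigma_{2n}}\partial I^{2n}$ and check carefully whether that identification accounts for the off-axis strata you correctly observe.
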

	\begin{proof}
            The freeness of the action of $\Sigma_{2n}$ on $C_{2n}(\rho)$ implies
 \[C_{2n}(\rho) \underset{\Sigma_{2n}}{\times} S^{2n-1} / (C_{2n} (\rho) \underset{\Sigma_{2n}}{\times} *) \simeq C_{2n} (\rho)/ \Sigma_{2n} \times S^{2n-1}/(C_{2n} (\rho) / \Sigma_{2n} \times *) .\] Since $S^{2n-1}$ is $2n-1$-connective, we can conclude that $H^{1}(M_{2n}(\partial I^{2n}); \umZ_2) = 0$.
\end{proof}

 We return to the proof of Theorem \ref{thm:main}. Let $Y$ denote the mapping cone of $h.$ The map of cofibrations
	\[
	\begin{tikzcd}
		M_{2n} (\partial I^{2n})_2 \arrow[r]{h} \arrow[d, equal] & A_{n - 1} \arrow[r] \arrow[d] & Y \arrow[r] \arrow[d, "\phi"] & \cF_{2n} / \cF_{2n - 1} \arrow[d, equal] \\
		M_{2n}(\partial I^{2n})_2 \arrow[r] & \cF_{2n - 1} \arrow[r] & \cF_{2n} \arrow[r] & \cF_{2n} / \cF_{2n - 1}
	\end{tikzcd}
	\]
	shows that $H {\umF_2}_\star Y \to H {\umF_2}_\star \cF_{2n}$ is injective with image spanned by monomials of weight divisible by $2.$ There is a map of fibrations 
	\[
	\begin{tikzcd}
		Y \arrow[rr] \arrow[d] & & A_n \arrow[d] \\
		S_2^1 \times Y \arrow[r, "1 \times \phi"] \arrow[d, "p_1"] & S_2^1 \times \cF_{2n}= \cF_1 \times \cF_{2n} \arrow[r, "\mu"] & \cF_{2n + 1} \arrow[d] \\
		S_2^1 \arrow[rr, equal] & & S_2^1.
	\end{tikzcd}
	\]
	The map of total spaces induces an isomorphism in $H \umF_2$-homology, and since $\cF_{2n + 1}$ is $H \umF_2$-local, this map is a $H \umF_2$-localization. Hence, there is an equivalence of fibrations 
	\[
	\begin{tikzcd}
		Y_p \arrow[r] \arrow[d] & A_n \arrow[d] \\
		S_2^1 \times Y_p \arrow[r]{} \arrow[d]{} & \cF_{2n + 1} \arrow[d] \\
		S_2^1 \arrow[r, equal] & S_2^1, 
	\end{tikzcd}
	\]
	extending the induction and completing the proof of Theorem \ref{thm:main}.

Since $\phi: Y_{p} \rightarrow \cF_{2n}$ induces an injection $H {\umF_2}_\star Y \to H {\umF_2}_\star \cF_{2n}$ with image spanned by monomials of weight divisible by $2$, the following corollary is immediate.
 \begin{cor} \label{cor:monomialwt}
    The map $H {\umF_2}_\star A_n \rightarrow H {\umF_2}_\star\cF_{2n + 1 }$ is the inclusion of monomials of weight divisible by $2$ into monomials of weight less than or equal to $2n + 1.$
 \end{cor}

 \section{Proof of Main Theorem}

Now we can use Theorems \ref{thm:HZcomp} and \ref{thm:main} to construct $B_0(n),$ a $C_2$-equivariant analogue of the $n$th integral Brown-Gilter spectrum at the prime $2$ as a Thom spectrum.
Let $\tilde{A_n}$ denote the homotopy fiber of the composite
 $$F_{2n + 1} \Omega^\rho S^{\rho + 1} \to \Omega^\rho S^{\rho + 1} \to S^1$$
 so the $H \umF_2$-localization of $\tilde{A_n}$ is $A_n.$
Using the $H \umF_2$-localization of the commutative diagram
	\begin{equation} \label{eq:comdiag}
		\begin{tikzcd}
			\tilde{A_n} \arrow[r] \arrow[d, "i_n"]{} & F_{2n + 1} \arrow[d] \\
			\Omega^\rho S^{\rho + 1} \langle \rho + 1 \rangle \arrow[r] & \Omega^\rho S^{\rho + 1},
		\end{tikzcd}
	\end{equation}
we define $B_0(n)$ to be the Thom spectrum $(A_n)^{{\mu}}.$ 
	
	Taking the Thom spectrum of the composites
	$$ A_m \times A_n \xrightarrow[]{\phi_m \times \phi_n} \cF_{2m} \times \cF_{2n} \to \cF_{2m + 2n} \to \cF_{2n + 2m + 1} \to A_{m + n},$$
	where the last map splits the equivalence of Theorem \ref{thm:main}, yields pairings 
 $$B_0(m) \wedge B_0(n) \to B_0( m + n).$$

Combining Corollary \ref{cor:monomialwt} with Diagram (\ref{eq:comdiag}), we see that the map ${(i_n)}^{{\mu}} : B_0(n) \to H \umZ_2$ induces a monomorphism in homology, with image spanned by monomials of weight less than or equal to $2n$. We have now proven Theorem \ref{thm:C2intBGspectra}, our $C_2$-equivariant analogue of \cite[Theorem~1.5(i),(ii)]{CDGM1988}.
	\begin{theorem}\label{thm:C2intBGspectra}
		For $n > 0,$ there is a $2$-complete spectrum $B_0(n)$ and a map 
		$$B_0(n) \xrightarrow[]{g} H {\umZ_{2}} $$
		such that
		\begin{itemize}
			\item[(i)] $g_\star$ sends $H {\umF_2}_\star(B_0(n))$ isomorphically onto the span of monomials of weight $\leq 2n;$
			\item[(ii)] there are pairings
			$$ B_0(m) \wedge B_0(n) \to B_0 (m + n) $$
			whose homology homomorphism is compatible with the multiplication in $H {\umF_2}_\star ( H \umZ_2 ).$
		\end{itemize}
	\end{theorem}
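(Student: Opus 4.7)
The plan is to assemble the pieces provided by Theorems \ref{thm:HZcomp} and \ref{thm:main}. First, I would define $B_1(n)$ explicitly as a Thom spectrum. The $H\umF_2$-localization of the commutative square relating $\tilde{A}_n$ to $F_{2n+1}\Omega^\rho S^{\rho+1}$ and $\Omega^\rho S^{\rho+1}\langle \rho+1\rangle$ supplies a natural $C_2$-equivariant map
$$A_n \to \Omega^\rho S^{\rho+1}\langle \rho+1\rangle_2.$$
Pulling back the structure map $\mu$ of Theorem \ref{thm:HZcomp} along this map endows $A_n$ with a stable spherical fibration, and I would set $B_1(n) := (A_n)^{\mu}$. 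Thomifying the inclusion and composing with the equivalence $(\Omega^\rho S^{\rho+1}\langle \rho+1\rangle_2)^\mu \simeq H\umZ_2$ of Theorem \ref{thm:HZcomp} produces the map $g: B_1(n) \to H\umZ_2$. The $H\umF_2$-completeness of $B_1(n)$ follows from the construction, since $A_n$ is built from $H\umF_2$-local spaces.

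Next I would prove part (i). The Thom isomorphism (valid in the $C_2$-equivariant setting because the Thom class is an equivalence of $C_2$-spectra modulo $H\umF_2$) reduces the computation of $H\umF_{2\star}(B_1(n))$ to $H\umF_{2\star}(A_n)$. By Theorem \ref{thm:main}, $A_n$ is a factor of $\cF_{2n+1}$ with $\cF_{2n+1} \simeq S_2^1 \times A_n$, so $H\umF_{2\star}(A_n)$ is identified with $H\umF_{2\star}(\cF_{2n})$, which by construction is the span of monomials of weight $\leq 2n$ in $H\umF_{2\star}(\Omega^\rho S^{\rho+1})$. Combining with Lemma \ref{claim:wtTwoDiv}, which identifies $H\umF_{2\star}(\Omega^\rho S^{\rho+1}\langle \rho+1\rangle)$ with the monomials of weight divisible by $2$, and matching this with the generators $\btau_j, \bxi_j$ of $H\umF_{2\star}H\umZ_2$ under the Thom isomorphism, yields the desired isomorphism onto monomials of weight $\leq 2n$.

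For part (ii), I would construct the pairings by applying the Thom spectrum functor to the composite
$$A_m \times A_n \xrightarrow{\phi_m \times \phi_n} \cF_{2m} \times \cF_{2n} \xrightarrow{\mu} \cF_{2m+2n} \hookrightarrow \cF_{2m+2n+1} \to A_{m+n},$$
where the last map is a section of the equivalence produced by Theorem \ref{thm:main}. Since the maps $\mu$ refining the loop-space pairing are compatible with the $\mu$-structures (all arising from restricting the single loop-space product on $\Omega^\rho S^{\rho+1}\langle \rho+1\rangle_2$), Thomification yields maps $B_1(m)\wedge B_1(n) \to B_1(m+n)$. Compatibility with the multiplication on $H\umF_{2\star}H\umZ_2$ follows by naturality of the Thom isomorphism together with the fact that the loop-space product on $\Omega^\rho S^{\rho+1}\langle \rho+1\rangle_2$ Thomifies to the ring structure on $H\umZ_2$.

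The main obstacle I would anticipate is not in the structural proof but in carefully verifying that the Thom isomorphism transports the geometric weight filtration on $H\umF_{2\star}(\Omega^\rho S^{\rho+1}\langle \rho+1\rangle)$ to the algebraic weight filtration on $H\umF_{2\star}H\umZ_2$ defined via the generators $\btau_j, \bxi_j$ — this requires identifying the Dyer--Lashof-type generators in $H\umF_{2\star}(\Omega^\rho S^{\rho+1})$ with the dual Steenrod algebra generators under the Thom equivalence, using work from \cite{BW2018} and \cite{Ormsby2011}. Once this matching of generators and weights is pinned down, the statement falls out directly from Theorem \ref{thm:main}.
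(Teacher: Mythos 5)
Your proposal follows the paper's argument essentially verbatim: you define $B_1(n)=(A_n)^{\mu}$ via the localized square relating $\tilde{A}_n$ to $\Omega^\rho S^{\rho+1}\langle\rho+1\rangle$, obtain $g$ by Thomifying into $(\Omega^\rho S^{\rho+1}\langle\rho+1\rangle_2)^\mu\simeq H\umZ_2$ (Theorem \ref{thm:HZcomp}), deduce (i) from Theorem \ref{thm:main} together with the weight identification of Lemma \ref{claim:wtTwoDiv}, and build the pairings from the same composite $A_m\times A_n\to\cF_{2m}\times\cF_{2n}\to\cF_{2m+2n+1}\to A_{m+n}$ using the splitting of Theorem \ref{thm:main}. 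This is correct and matches the paper's proof, which is if anything terser on the points (Thom isomorphism, generator/weight matching) that you spell out.
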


	\begin{rmk} \label{rmk:GeomCond}
		It is not currently known if a $C_2$-equivariant analogue of \cite[Theorem~1.5(iii)]{CDGM1988} holds or if there should be some other criterion determining (integral) Brown--Gitler spectra in the $C_2$-equivariant case. In the non-equivariant case, \cite[Theorem~1.5(iii)]{CDGM1988} states that for any CW-complex $X,$
		$$g_*: B_0(n)_i(X) \to H_i(X;\mZ_2)$$
		is surjective if $i \leq 2p(n + 1) - 1.$ This condition originated in the obstruction theoretic construction
  and would be interesting to study further in the $C_2$-equivariant setting. 
\end{rmk}

\bibliographystyle{alpha}
\bibliography{bib}

\end{document}